\newcommand{\Z}{\ensuremath{\mathbb{Z}}}
\newcommand{\R}{\ensuremath{\mathbb{R}}}
\renewcommand{\Re}{\mathrm{Re}\,}
\newtheorem{defi}{Definition}
\newtheorem{theorem}[defi]{Theorem}
\newtheorem{remark}[defi]{Remark}
\title{A new sampling indicator function for stable imaging of  periodic scattering media }
\author{Dinh-Liem Nguyen\thanks{Department of Mathematics, Kansas State University, Manhattan, KS 66506; (\texttt{dlnguyen@ksu.edu}, \texttt{stahlkj@ksu.edu},  \texttt{trungt@ksu.edu})} \and Kale Stahl\footnotemark[1]   \and Trung Truong\footnotemark[1] 
}
\begin{document}

\date{}
\maketitle

\begin{abstract}
This paper is concerned with the inverse  problem of determining the shape of penetrable periodic scatterers from 
scattered field data.  We propose a sampling method with a novel indicator function  for solving this inverse problem. 
This indicator function is  very simple to implement and robust against noise in the data. The resolution 
 and stability  analysis of the indicator function is analyzed. Our numerical study shows that the  proposed sampling method 
 is more stable than the factorization method and more efficient than the direct or orthogonality sampling method in reconstructing periodic 
 scatterers. 
\end{abstract}

\sloppy

{\bf Keywords.}  sampling indicator function, inverse scattering, periodic structures, shape reconstruction, photonic crystals

\bigskip

{\bf AMS subject classification. } 35R30, 78A46, 65C20

\section{Introduction}

In this paper we aim to numerically solve the inverse scattering problem for  periodic 
media in $\R^2$. The periodic media of interest are unboundedly periodic  in 
the horizontal direction and bounded in the vertical direction. These periodic media are motivated by  one-dimensional 
photonic crystals and the inverse problem of interest  
is inspired by applications of nondestructive evaluations for photonic crystals. There have 
been an increasing amount  of studies on 
numerical methods for shape reconstruction of periodic scattering media 
 during the past years, see~\cite{Arens2005, Arens2003a, Bao2014, Elsch2012, Hadda2017, 
Lechl2013b, Nguye2014, Sandf2010, Yang2012, Nguye2016, Jiang2017, Cakon2019, Nguye2022b, Nguye2022}. Two major approaches that were studied in these papers are the factorization method  and the near field imaging method. The latter method, which  relies  on a transformed field expansion,  can provide  super-resolved resolution. However,  this method requires  the periodic scattering structure to be   a smooth periodic function multiplied by a small surface deformation parameter.  The factorization method can essentially work for periodic scattering structures of arbitrary shape but it  is not very robust  against the noise in the scattering data. This method belongs to the class of sampling or qualitative methods that were introduced by D. Colton and A. Kirsch~\cite{Colto1996, Kirsc1998}. The factorization method aims to construct a necessary and sufficient characterization of the unknown scatterer from multi-static data. We refer to~\cite{Kirsc2008} for more details about the factorization method.  

In this work we develop a sampling method with a new indicator  function to solve the inverse scattering problem for periodic media. This sampling method is inspired by the  orthogonality sampling method~\cite{Potth2010} and the direct sampling method~\cite{Ito2012}. These two sampling methods share similar ideas and features and were studied independently. For  simplicity we will refer to them as the orthogonality sampling method. The computation of the new indicator function  is very simple and fast as one only needs to evaluate two finite sums involving  the propagating modes of the scattered field data. Like the  orthogonality sampling method, the proposed sampling method also does not involve solving any ill-posed problems and it is very robust against noise in the data. The resolution of the new indicator function is studied using Green's identities and the Rayleigh expansion  of the $\alpha$-quasiperiodic  fields  of the scattering problem. The stability of the indicator function is also established.  The  performance of the new indicator function is studied in various contexts in the numerical study. The numerical study also shows that the proposed sampling method is more robust than the factorization method and more efficient than the  orthogonality sampling method in reconstructing periodic scattering media.
We also want to mention that although the orthogonality sampling method has been studied for inverse scattering from bounded objects~\cite{Potth2010, Gries2011, Ito2012, Ito2013, Kang2018, Harri2020, Le2022}, its application  to the inverse scattering problem for periodic media is still not known.

The paper is organized as follows. The basics of the scattering from periodic media  and the inverse problem of interest are described in Section~\ref{setup}. The new indicator function and its resolution and stability analysis are discussed in Section~\ref{theory}.  
Section~\ref{numerical} is dedicated to a numerical study of the new indicator function and its comparison to the factorization method and the orthogonality sampling method.

\section{Problem setup}
\label{setup}

We consider a two-dimensional  medium which is unboundedly $2\pi$-periodic in $x_1$-direction and bounded in $x_2$-direction. 
Let  $n$ be a bounded function which is  $2\pi$-periodic with respect to $x_1$. Suppose that 
 the interior of the periodic medium is characterized by $n$ and that the exterior of the
 periodic medium is homogeneous which means $n = 1$ in these areas.  Note that the period can be any arbitrary value, but it is chosen to be $2\pi$ for the convenience of the presentation.
 For $\alpha \in \R$,
 we define that a function $f$ is $\alpha$-quasiperiodic in $x_1$ if  
 $$
   f(x_1 + 2\pi j,x_2) = \text{e}^{i2\pi j \alpha} f(x_1,x_2), \quad j\in \Z, \quad (x_1,x_2)^\top \in \R^2.
$$
From now on we will call functions with this property $\alpha$-quasiperiodic functions for short. 
A typical example of $\alpha$-quasiperiodic functions is a plane wave  (e.g. $\exp(i k(d_1 x_1 + d_2x_2))$ with $d_1^2+d_2^2 = 1$, $k>0$). 
Suppose that the periodic medium is illuminated by  an $\alpha$-quasiperiodic incident field $u_{in}$ with wave number $k>0$.  
Note that since the medium is unboundedly periodic in $x_1$, we are only interested  incident fields propagating downward or upward toward 
the medium.
The scattering of this incident field  by  the  periodic  medium
produces the scattered field $u_{\mathrm{sc}}$ described by
\begin{align}
\label{eq:HmodeEquation1}
\Delta u_{\mathrm{sc}} + k^2 u_{\mathrm{sc}} = -k^2 q u \quad \text{in } \R^2,
\end{align}
where $q$ is the contrast given by
\[
 q = n - 1.
\]
It is well known for this  scattering problem that the scattered field $u_{\mathrm{sc}}$ 
must also be $\alpha$-quasiperiodic, and that the direct problem of finding 
the scattered field can be reduced to one period 
$$
\Omega := (-\pi,\pi)\times \R.
$$
Let $D = \text{supp}(q) \cap \Omega$. For $h>0$ such that
\begin{align}
\label{hh}
h>\sup \big\{|x_2|: \, (x_1,x_2)^\top \in D \big\},
\end{align}
the direct scattering problem is completed by the Rayleigh 
expansion condition for the scattered field
\begin{equation}
\label{eq:radiationCondition}
  u_{sc}(x) =
  \begin{cases}
   \sum_{j\in \Z} {u}^{+}_j \text{e}^{i \alpha_j x_1 + i \beta_j (x_2- h)}, &  x_2 \geq h, \\
      \sum_{j\in \Z} {u}^{-}_j \text{e}^{i \alpha_j x_1 - i \beta_j (x_2 + h)}, &  x_2 \leq - h, 
  \end{cases}
\end{equation}
where 
\begin{align*}
   \alpha_j := \alpha + j, \quad 
\beta_j:= \begin{cases} 
              \sqrt{k^2 - \alpha_j^2}, & k^2 \geq \alpha_j^2 \\ 
              i \sqrt{\alpha_j^2- k^2}, & k^2 < \alpha_j^2
            \end{cases}, \quad  j \in \Z,
\end{align*}
and $({u}^{\pm}_j)_{j\in\Z}$ are the (complex-valued) Rayleigh sequences of the scattered field $u_{sc}$.  
%
The condition~\eqref{eq:radiationCondition} means that the scattered field $u_{{sc}}$ is 
an outgoing wave. Note that only a finite number of terms in~\eqref{eq:radiationCondition} are propagating 
plane waves which are called propagating modes, the rest are evanescent modes which correspond 
to exponentially decaying terms. From now, we call a function satisfying~\eqref{eq:radiationCondition} a
radiating function. In addition, we also assume that $\beta_j$ is nonzero for all $j$ which means  
the Wood anomalies are excluded in our analysis.


Well-posedness of scattering problem~\eqref{eq:HmodeEquation1}--\eqref{eq:radiationCondition}
is well-known, see for instance~\cite{Bonne1994}.
For $r>0$  define  
\[
  \Omega_r:=(-\pi,\pi)\times(-r, r), \quad \Gamma_{\pm r}:= (-\pi,\pi)\times\{\pm r\}.
\]
Recall the constant $h$ in~\eqref{hh}. For the inverse problem of interest we  measure the scattered field $u_{sc}$ on $\Gamma_{\pm r}$ for some 
$r \geq h$. From the Rayleigh expansion of $u_{sc}$, knowing  $u_{sc}$ on $\Gamma_{\pm r}$  is equivalent to knowing the Rayleigh coefficients $(u^\pm_j)_{j \in \Z}$. Since the evanescent modes  are associated with the exponentially decaying terms in the Rayleigh expansion~\eqref{eq:radiationCondition}, it is typically difficult to obtain these modes in practice unless one can measure extremely near  (e.g. within one wavelength)  the periodic scatterers. Thus, we consider only the propagating modes for the scattering data of our inverse problem   as follows.

\textbf{Inverse problem.} Given the Rayleigh coefficients $( {u}_j^\pm )$  for $j \in \Z$ such that $\beta_j > 0$, determine $D$.

\section{A new indicator function and its properties}
\label{theory}

Recall that the $\alpha$-quasiperiodic Green function of the direct problem is given by
\begin{equation}
 \label{eq:GreenForm1}
 G(x,y) = \frac{i}{4\pi}\sum_{j \in\Z} \frac{1}{\beta_j}\text{e}^{i \alpha_j(x_1-y_1) + i \beta_j|x_2-y_2|}, 
\quad x,y\in \Omega,\, x_2\neq y_2.
\end{equation}
It is well known that the direct problem is equivalent to 
the Lippmann-Schwinger equation
\begin{equation}\label{lse}
u_{sc}(x) = k^2\int_{D} G(x,y)q(y)u(y)\, dy.
\end{equation}
Let $N$ be the number of incident fields we use for the inverse problem. 
Let ${u}^\pm_j(l)$ be the Rayleigh coefficients of the scattered field $u_{sc}(\cdot,l)$ generated by incident field $u_{in}(\cdot,l)$ for $l = 1, 2, \dots, N$. 
For $p\in\mathbb{N}, z \in \Omega$, define the following indicator function
$$
I(z) := \sum_{l = 1}^N \left| \sum_{j:\beta_j>0}\beta_j \left({u}^+_j(l) \overline{{g}^+_j(z)} + {u}^-_j(l) \overline{{g}^-_j(z)} \right) \right|^p, 
$$
where 
$$
{g}^\pm_j(z) = \frac{i}{4\pi\beta_j}\text{e}^{-i\alpha_j z_1 \mp i\beta_j(z_2 \mp h)}.
$$
This indicator function $I(z)$ aims to determine $D$ in $\Omega$ and $z$ plays the role of sampling points. 
We note that $g^\pm_j(z)$ are also the Rayleigh coefficients of the $\alpha$-quasiperiodic Green function. 
 In the proof of following theorem we will drop the 
dependence of ${u}^\pm_j$ on $l$ for the convenience of the presentation but we keep $l$ in the total field $u(y,l)$ that is generated by 
incident field $u_{in}(y,l)$. We analyze  the behavior of  $I(z)$ in the following theorem.
\begin{theorem}
The indicator function satisfies
\begin{align*}
I(z) = \left(\frac{k^2}{8\pi} \right)^p \sum_{l = 1}^N \left| \int_{D}  \left[ J_0(k|z-y|)+ w_{\alpha}(z, y) \right]q(y)u(y,l) dy  \right|^p
\end{align*}
where  $J_0$ is the Bessel function of the first kind, and
\begin{equation}
\label{w}
w_{\alpha}(z,y) :=  \sum_{ j \in \Z\setminus\{0\} }e^{-i2\pi j \alpha}  J_0 \left(k\sqrt{(z_1-y_1 + 2j\pi)^2 + (z_2 - y_2)^2} \right).
\end{equation}
\end{theorem}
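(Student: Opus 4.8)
The plan is to express the Rayleigh coefficients of the scattered fields through the Lippmann--Schwinger representation, thereby reducing the statement to a single scalar identity that involves only the finitely many propagating modes, and then to prove that identity from the two classical representations of the $\alpha$-quasiperiodic Green function.

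First I would read off the Rayleigh coefficients of $u_{sc}(\cdot,l)$ from~\eqref{lse} and~\eqref{eq:GreenForm1}: for $y\in D$ one has $|x_2-y_2|=x_2-y_2$ on $\{x_2\ge h\}$ and $|x_2-y_2|=y_2-x_2$ on $\{x_2\le -h\}$, so inserting~\eqref{eq:GreenForm1} into~\eqref{lse} and comparing with~\eqref{eq:radiationCondition} on the two half-planes $\{\pm x_2>h\}$ yields
\[
u^\pm_j(l)=\frac{ik^2}{4\pi\beta_j}\int_D e^{-i\alpha_j y_1+i\beta_j(h\mp y_2)}\,q(y)\,u(y,l)\,dy .
\]
For the propagating indices $\alpha_j$ and $\beta_j$ are real, so computing $\beta_j\,u^\pm_j(l)\,\overline{g^\pm_j(z)}$ cancels all the $h$-dependent exponentials, and adding the $+$ and $-$ contributions produces $e^{i\beta_j(z_2-y_2)}+e^{-i\beta_j(z_2-y_2)}=2\cos(\beta_j(z_2-y_2))$. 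Summing over the finitely many $j$ with $\beta_j>0$,
\[
\sum_{j:\beta_j>0}\beta_j\Big(u^+_j(l)\,\overline{g^+_j(z)}+u^-_j(l)\,\overline{g^-_j(z)}\Big)=\frac{k^2}{8\pi^2}\int_D F(z_1-y_1,z_2-y_2)\,q(y)\,u(y,l)\,dy ,
\]
where $F(t_1,t_2):=\sum_{j:\beta_j>0}\beta_j^{-1}e^{i\alpha_j t_1}\cos(\beta_j t_2)$ is a finite sum. Raising moduli to the power $p$ and summing over $l=1,\dots,N$, the theorem is equivalent to the pointwise identity
\[
F(t_1,t_2)=\pi\,J_0\!\big(k\sqrt{t_1^2+t_2^2}\big)+\pi\sum_{n\in\Z\setminus\{0\}}e^{-i2\pi n\alpha}\,J_0\!\Big(k\sqrt{(t_1+2\pi n)^2+t_2^2}\Big)
\]
at $t_1=z_1-y_1$, $t_2=z_2-y_2$ (the extra $\pi$ combines with $\tfrac{1}{8\pi^2}$ to give the constant $(k^2/8\pi)^p$).

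To prove this identity I would use the Rayleigh form~\eqref{eq:GreenForm1} of $G$ together with its image representation $G(x,y)=\tfrac{i}{4}\sum_{n\in\Z}e^{i2\pi n\alpha}H_0^{(1)}\!\big(k|x-y-(2\pi n,0)^\top|\big)$, the two being equal by the Poisson summation formula in the $x_1$-variable (here one uses that Wood anomalies are excluded, so the series converge). Writing $\rho_n(t):=\sqrt{(t_1+2\pi n)^2+t_2^2}$ and re-indexing $n\mapsto-n$, equating the two representations yields
\[
\sum_{n\in\Z}e^{-i2\pi n\alpha}\,H_0^{(1)}\!\big(k\rho_n(t)\big)=\frac1\pi\sum_{j\in\Z}\beta_j^{-1}\,e^{i\alpha_j t_1+i\beta_j|t_2|} .
\]
Taking complex conjugates (using $\overline{H_0^{(1)}}=H_0^{(2)}$ on $(0,\infty)$, $\overline{\beta_j^{-1}}=\beta_j^{-1}$ for $\beta_j>0$ and $\overline{\beta_j^{-1}}=-\beta_j^{-1}$ on the evanescent modes, on which $e^{i\beta_j|t_2|}$ stays real and decaying) and then substituting $n\mapsto-n$, $t_1\mapsto-t_1$ gives the companion identity: the same as above but with $H_0^{(1)}$ replaced by $H_0^{(2)}$, with $i\beta_j|t_2|$ replaced by $-i\beta_j|t_2|$ on the propagating modes, and with an extra overall minus sign on the evanescent modes. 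Adding the two identities and using $J_0=\tfrac12\big(H_0^{(1)}+H_0^{(2)}\big)$, the evanescent contributions cancel exactly, while the propagating ones combine via $e^{i\beta_j|t_2|}+e^{-i\beta_j|t_2|}=2\cos(\beta_j t_2)$, leaving
\[
\sum_{n\in\Z}e^{-i2\pi n\alpha}\,J_0\!\big(k\rho_n(t)\big)=\frac1\pi\sum_{j:\beta_j>0}\beta_j^{-1}\,e^{i\alpha_j t_1}\cos(\beta_j t_2)=\frac1\pi\,F(t_1,t_2) .
\]
Setting $t=z-y$ this is precisely the identity needed in the previous step, and the theorem follows.

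The step I expect to be the main obstacle is the scalar identity above: one must pin down the image representation of $G$ and, more importantly, carry the complex conjugation through with care, since the cancellation of the evanescent terms rests entirely on $\overline{\beta_j^{-1}}=-\beta_j^{-1}$ holding precisely on those terms while $\overline{\beta_j^{-1}}=\beta_j^{-1}$ on the propagating ones --- so the bookkeeping of which exponentials are oscillatory and which are decaying must be done very carefully. Convergence of the Bessel lattice sums (hence well-definedness of $w_\alpha$) likewise relies on the standing assumption that $\beta_j\neq 0$ for all $j$, i.e.\ that Wood anomalies are absent.
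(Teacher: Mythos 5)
Your proposal is correct, and it shares the paper's overall skeleton (Rayleigh coefficients of the scattered field obtained from the Lippmann--Schwinger equation, then a kernel identity identifying the finite propagating-mode sum with the lattice sum $J_0+w_\alpha$, with the constant $k^2/8\pi$ coming out right), but the key step is proved by a genuinely different route. The paper first establishes a Helmholtz--Kirchhoff-type identity by applying Green's second identity to $G(\cdot,x_s)$ and $\overline{G(\cdot,x_t)}$ over $\Omega_h$ and using orthogonality of the modes on $\Gamma_{\pm h}$, obtaining $F(x_t,x_s)=\frac{1}{2i}\bigl(G(x_t,x_s)-\overline{G(x_s,x_t)}\bigr)=2\pi\sum_{j}\Re\beta_j\bigl(\overline{g_j^+(x_t)}g_j^+(x_s)+\overline{g_j^-(x_t)}g_j^-(x_s)\bigr)$, where the evanescent modes drop out because $\Re\beta_j=0$; only then does it invoke the image (Hankel) representation and $J_0=\tfrac12\bigl(H_0^{(1)}+H_0^{(2)}\bigr)$ to identify $F$ with $\tfrac14(J_0+w_\alpha)$. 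You bypass the Green's-identity/boundary-integral stage entirely: you equate the modal (Rayleigh) and image series for $G$, conjugate term by term, flip $t_1\mapsto -t_1$ and re-index, and add; your bookkeeping $\overline{\beta_j^{-1}}=-\beta_j^{-1}$ on evanescent indices is exactly the paper's $\Re\beta_j=0$ in a different guise, and the resulting scalar identity $\sum_{j:\beta_j>0}\beta_j^{-1}e^{i\alpha_j t_1}\cos(\beta_j t_2)=\pi\bigl(J_0(k|t|)+w_\alpha\bigr)$ is precisely the paper's combined content of \eqref{id1} and the final computation of $F$. What each approach buys: the paper's argument is the more structural one (it exhibits the indicator as a Kirchhoff-type migration of the data and only needs the Rayleigh expansion of $G$ on $\Gamma_{\pm h}$ at that stage), whereas yours is more elementary and purely series-algebraic, but it leans on the equality of the two Green's function representations (Poisson summation) and on the conditionally convergent Hankel/Bessel lattice sums --- both of which the paper also uses without proof, so the level of rigor is comparable. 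One small point worth stating explicitly in your write-up: the modal series for $G$, and hence your scalar identity, is derived only for $t_2\neq 0$; this is harmless because in the final formula the identity is used under the integral in $y$ and $\{y\in D:\ y_2=z_2\}$ is a null set (or one extends by continuity, both sides being continuous), but the restriction should be acknowledged.
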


\begin{remark}
Since $z_1 - y_1 + 2j\pi$ is always nonzero for  $z_1,  y_1 \in (-\pi,\pi)$ and $j \neq 0$, the series in~\eqref{w} converges, and
$J_0 (k\sqrt{(z_1-y_1 + 2j\pi)^2 + (z_2 - y_2)^2} )$ has no peak at $z = y$. In fact, we   numerically observe that 
the kernel function
$J_0(k|z-y|)+ w_{\alpha}(z, y)$ peaks at $z = y$ and is relatively small  as $y$ and $z$ are away from each other, see Figure~\ref{fig0} for an example. 
Thus we expect from the theorem that $I(z)$  has small values as $z$ is outside $D$ and has much larger values as $z$ is inside $D$. 

%

\end{remark}

\begin{figure}[h!]
\centering
\begin{subfigure}{0.3\textwidth}
\centering
\includegraphics[trim=60 30 60 30,clip,width=\textwidth]{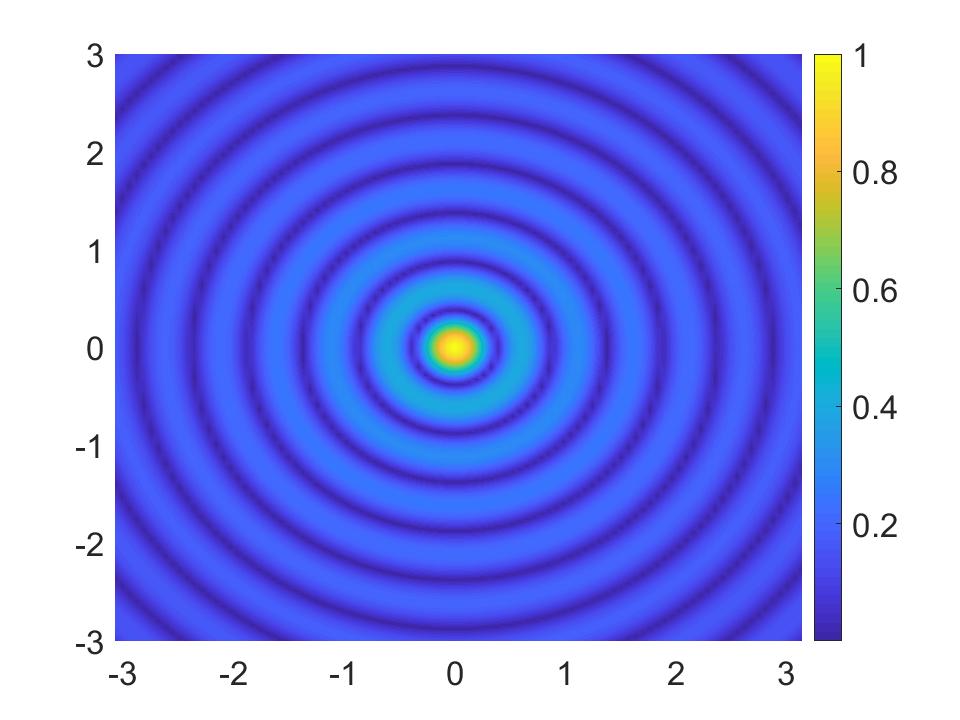}
\caption{$J_0(k|x-y|)$}
\end{subfigure}
\hspace{0.0cm}
\begin{subfigure}{0.3\textwidth}
\centering
\includegraphics[trim=60 30 60 30,clip,width=\textwidth]{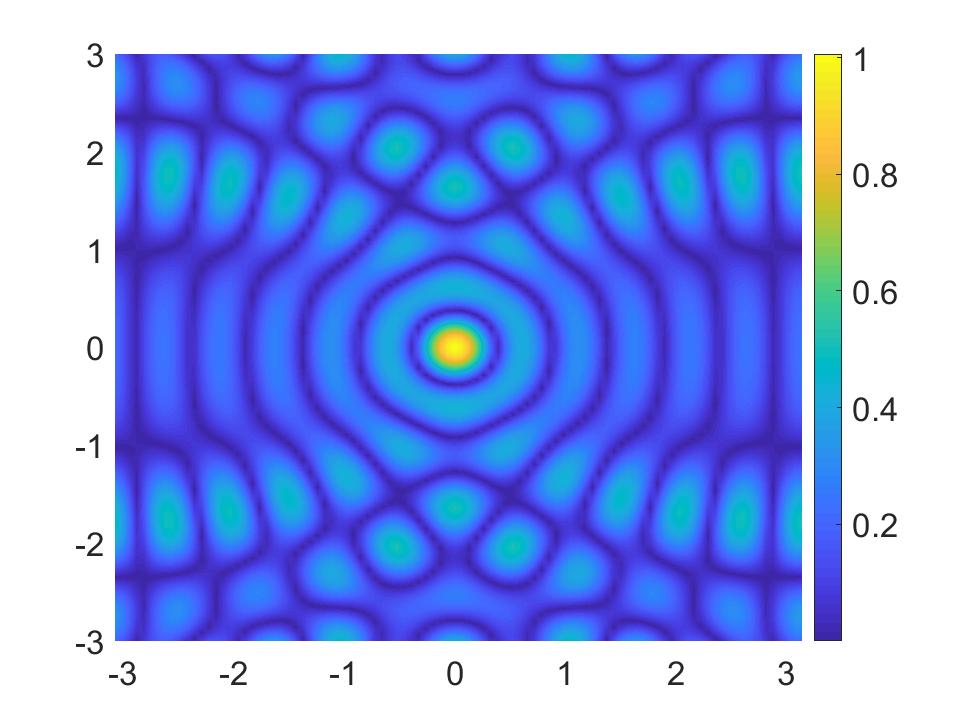}
\caption{$|J_0(k|x-y|) + w_0(x,y)|$}
\end{subfigure}
\begin{subfigure}{0.3\textwidth}
\centering
\includegraphics[trim=60 30 60 30,clip,width=\textwidth]{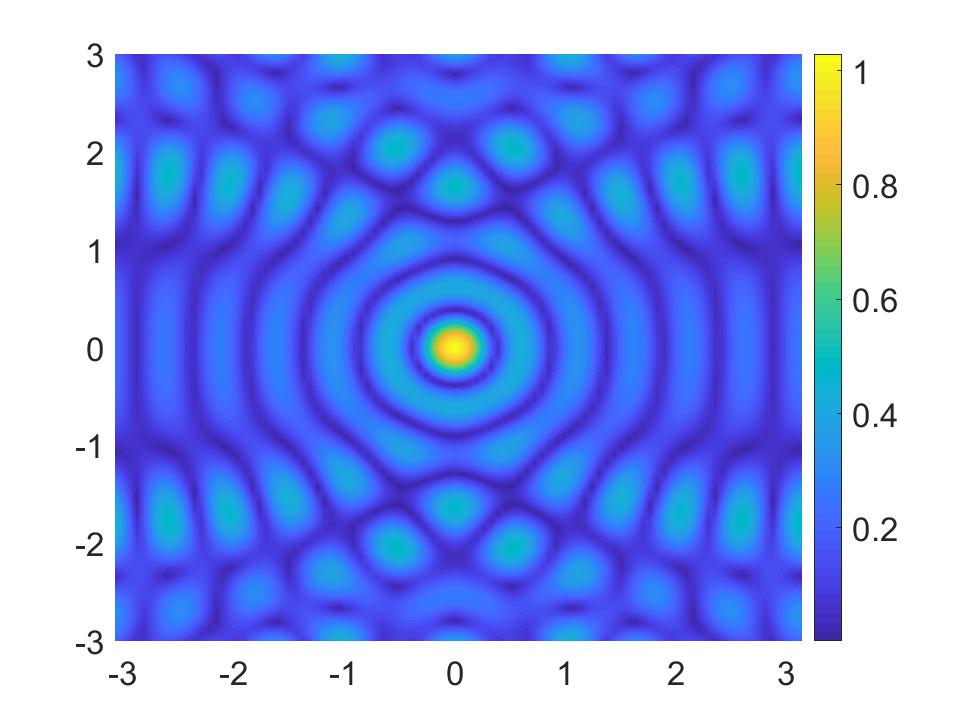}
\caption{$|J_0(k|x-y|) + w_{\pi/3}(x,y)|$}
\end{subfigure}
%
\centering
\begin{subfigure}{0.3\textwidth}
\centering
\includegraphics[trim=60 30 60 30,clip,width=\textwidth]{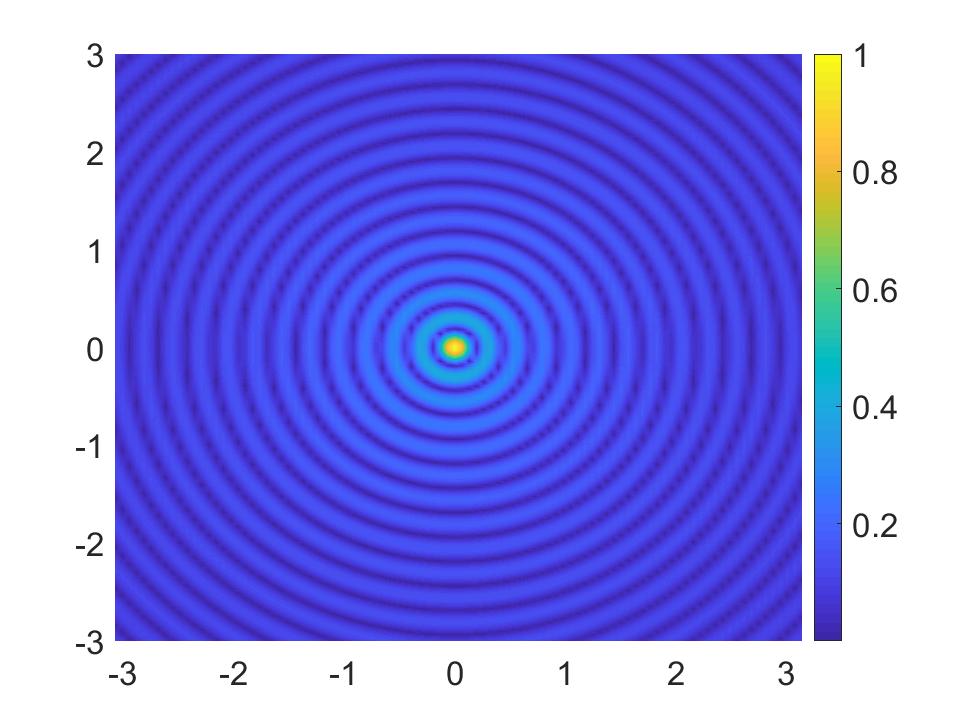}
\caption{$J_0(k|x-y|)$}
\end{subfigure}
\hspace{0.0cm}
\begin{subfigure}{0.3\textwidth}
\centering
\includegraphics[trim=60 30 60 30,clip,width=\textwidth]{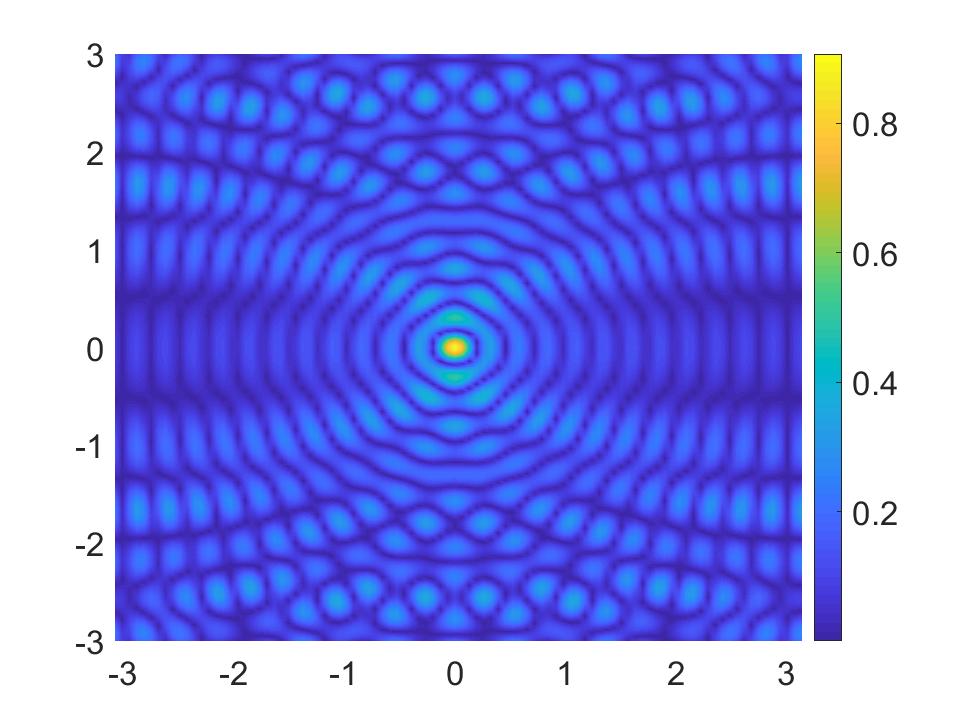}
\caption{$|J_0(k|x-y|) + w_0(x,y)|$}
\end{subfigure}
\begin{subfigure}{0.3\textwidth}
\centering
\includegraphics[trim=60 30 60 30,clip,width=\textwidth]{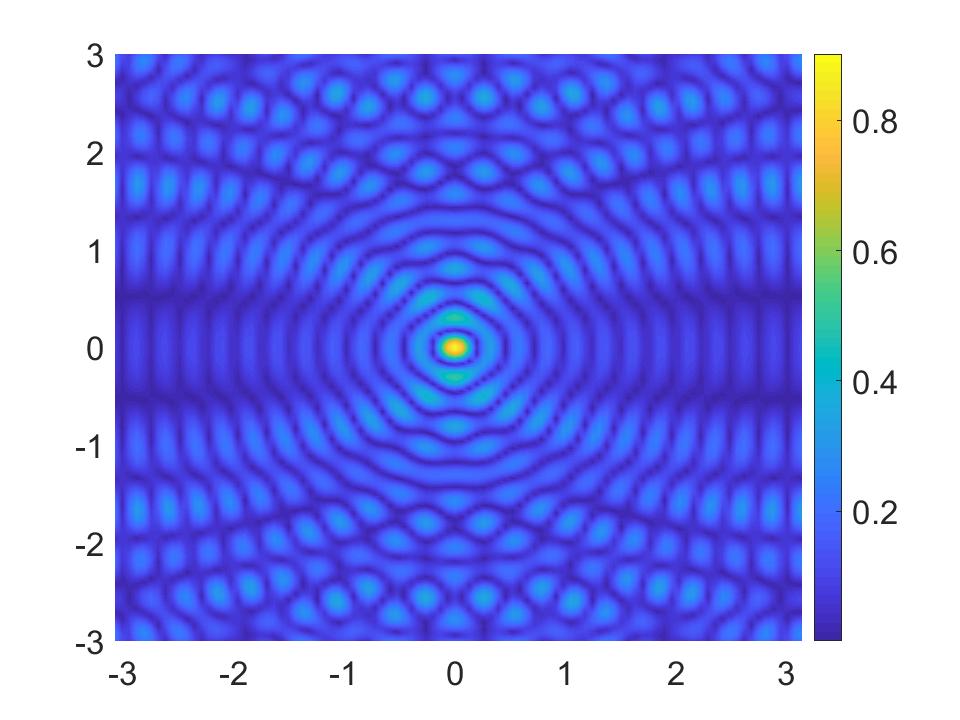}
\caption{$|J_0(k|x-y|) + w_{\pi/3}(x,y)|$}
\end{subfigure}
\caption{Functions $J_0(k|x-y|)$ and $|J_0(k|x-y|) + w_\alpha(x,y)|$ for $x \in (-\pi, \pi)\times (-3,3)$, $y=0$,  $k = 2\pi$ (first row) and $k = 4\pi$ (second row).}
\label{fig0}
\end{figure}


\begin{proof}
For any $x_s,x_t\in \Omega_h$, $G(x,x_s)$ solves
$$
\Delta G(x,x_s) + k^2G(x,x_s) = -\delta(x-x_s),\quad x\in   \Omega_h.
$$
Multiplying both sides by $\overline{G(x,x_t)}$ and integrating over $ \Omega_h$ gives
\begin{equation}\label{lemma1_01}
\int_{ \Omega_h} (\Delta G(x,x_s) + k^2G(x,x_s))\overline{G(x,x_t)}\ dx = -\overline{G(x_s,x_t)}.
\end{equation}
Similarly, $\overline{G(x,x_t)}$ solves
$$
\Delta \overline{G(x,x_t)} + k^2\overline{G(x,x_t)} = -\delta(x-x_t),\quad x\in \Omega_h,
$$
thus by multiplying both sides by $G(x,x_s)$ and integrating over $ \Omega_h$ we obtain
\begin{equation}\label{lemma1_02}
\int_{ \Omega_h} (\Delta \overline{G(x,x_t)} + k^2\overline{G(x,x_t)} )G(x,x_s)\ dx = -G(x_t,x_s).
\end{equation}
Subtracting (\ref{lemma1_02}) from (\ref{lemma1_01}) yields
\begin{align*}
G(x_t, x_s)-\overline{G(x_s, x_t)} &= \int_{ \Omega_h}\Delta G(x,x_s)\overline{G(x,x_t)} - \Delta \overline{G(x,x_t)} G(x,x_s)\ dx \\
&= \int_{\partial \Omega_h} \overline{G(x,x_t)} \frac{\partial G(x, x_s)}{\partial n} -G(x, x_s)\frac{\partial \overline{G(x, x_t)} }{\partial n}\ ds(x),
\end{align*}
where $n$ is an unit normal outward vector to $\partial  \Omega_h$. Note that $\partial  \Omega_h = \Gamma \cup \{\pm \pi\}\times (-h,h)$, but thanks to the $\alpha$-quasiperiodicity of $G$
$$
\int_{\{\pm \pi\}\times (-h,h)}\overline{G(x,x_t)} \frac{\partial G(x, x_s) }{\partial n}  -  G(x, x_s)\frac{\partial \overline{G(x, x_t)}}{\partial n}    \ ds(x) = 0.
$$
Therefore
\begin{equation}\label{lemma1_03}
G(x_t, x_s)  -  \overline{G(x_s, x_t)} = \int_{\Gamma_h \cup \Gamma_{-h}} \overline{G(x,x_t)} \frac{\partial G(x, x_s) }{\partial n} -  G(x, x_s)\frac{\partial \overline{G(x, x_t) }}{\partial n}  \ ds(x).
\end{equation}
Define
$$
F(x,y) := \frac{G(x,y) - \overline{G(y, x)}}{2i},\qquad x,y\in\Omega_h,
$$ 
then the left-hand side of \eqref{lemma1_03} equals $2iF(x_t,x_s)$. Now we calculate the right-hand side. Since $G$ satisfies the radiation condition, for a fixed $y\in  \Omega_h$,
\begin{equation}\label{rcg}
G(x, y)=\sum_{j\in \mathbb{Z}}{g}_j^{\pm}(y) e^{i \alpha_j x_1},\ x \in \Gamma_{\pm h}
\end{equation}
and
\begin{equation*}
\frac{\partial G(x, y)}{\partial n} = \sum_{j\in \mathbb{Z}} i \beta_j  {g}_j^{\pm}(y) e^{i \alpha_j x_1},\ x \in \Gamma_{\pm h}.
\end{equation*}
Therefore 
\begin{align*}
\int_{\Gamma_{\pm h}} \overline{G(x,x_t)} \frac{\partial G(x, x_s) }{\partial n}  \ ds(x) &= \int_{-\pi}^{\pi}\sum_{j_1,j_2\in \mathbb{Z}} i\beta_{j_2}\overline{{g}_{j_1}^{\pm}(x_t)}{g}_{j_2}^{\pm}(x_s)  e^{i (\alpha_{j_2}-\alpha_{j_1}) x_1}\ dx_1 \\
&= \sum_{j_1,j_2\in \mathbb{Z}}i\beta_{j_2}\overline{{g}_{j_1}^{\pm}(x_t)} {g}_{j_2}^{\pm}(x_s)  \int_{-\pi}^{\pi} e^{i (j_2-j_1) x_1}\ dx_1,
\end{align*}
and since
$$
\int_{-\pi}^{\pi} e^{i (j_2-j_1) x_1}\ dx_1 = 
\begin{cases} 
2\pi, &j_1 = j_2\\
0, &j_1 \neq j_2
\end{cases}
$$
we have
\begin{equation}\label{lemma1_1}
\int_{\Gamma_h \cup \Gamma_{-h}} \overline{G(x,x_t)} \frac{\partial G(x, x_s) }{\partial n} \ ds(x) = 2\pi i\sum_{j\in \mathbb{Z}}\beta_{j} \left(\overline{{g}_{j}^+(x_t)} {g}_{j}^+(x_s) + \overline{{g}_{j}^-(x_t)} {g}_{j}^-(x_s)\right).
\end{equation}
Similarly
\begin{equation}\label{lemma1_2}
\int_{\Gamma_h \cup \Gamma_{-h}}  G(x, x_s)\frac{\partial \overline{G(x, x_t)}}{\partial n}  \ ds(x) = -2\pi i\sum_{j\in \mathbb{Z}}\overline{\beta_{j}}  \left(\overline{{g}_{j}^+(x_t)} {g}_{j}^+(x_s) + \overline{{g}_{j}^-(x_t)} {g}_{j}^-(x_s)\right).
\end{equation}
Combining (\ref{lemma1_1}) and (\ref{lemma1_2}) yields
$$
G(x_t, x_s) -  \overline{G(x_s, x_t)} = 2\pi i \sum_{j\in \mathbb{Z}}(\beta_{j}+\overline{\beta_{j}})\left(\overline{{g}_{j}^+(x_t)} {g}_{j}^+(x_s) + \overline{{g}_{j}^-(x_t)} {g}_{j}^-(x_s)\right),
$$
that is
\begin{align}
\label{id1}
F(x_t, x_s) = 2\pi \sum_{j\in \mathbb{Z}} \Re \beta_j \left(\overline{{g}_{j}^+(x_t)} {g}_{j}^+(x_s) + \overline{{g}_{j}^-(x_t)} {g}_{j}^-(x_s)\right).
\end{align}
From the Rayleigh expansion~\eqref{eq:radiationCondition} we have that 
$$
{u}_j^{\pm}  = \frac{1}{2\pi}\int_{\Gamma_{\pm h}} u_{sc}(x) e^{-i\alpha_jx_1} ds(x).
$$
Thus, for $j\in\mathbb{Z}$, substituting (\ref{lse}) into the integral above gives
\begin{align*}
{u}_j^{\pm} &= \frac{1}{2\pi}\int_{\Gamma_{\pm h}} \left( k^2\int_{D} G(x,y)q(y)u(y) \ dy\right) e^{-i\alpha_jx_1}\ ds(x)\\
&= k^2\int_{D}\left(\frac{1}{2\pi}\int_{\Gamma_{\pm}}G(x,y)e^{-i\alpha_jx_1}\ ds(x)\right) q(y)u(y)\ dy = k^2 \int_{D} g_j^{\pm}(y)q(y)u(y)\ dy.
\end{align*}
%
%
For $z\in  \Omega$, using the Rayleigh coefficients of ${u}_j^{\pm}$ above and   \eqref{id1}  we obtain
\begin{align*}
&2\pi \sum_{j \in \Z}\Re\beta_j \left({u}^+_j \overline{g^+_j(z)} + {u}^-_j \overline{g^-_j(z)} \right) \\
&= k^2\int_{D}\ 2\pi  \sum_{j \in \Z}\Re\beta_j  \left( g_j^{+}(y)\overline{g^{+}_j(z)} + g_j^-(y)\overline{g^-_j(z)}  \right)q(y)u(y)\ dy \\
&= k^2 \int_{D}  F(z, y)q(y)u(y)\ dy.
\end{align*}
Now, using the following representation of the $\alpha$-quasiperiodic Green function 
$$
G(z, y) = \frac{i}{4} \sum_{j\in \Z}e^{-i2\pi j \alpha}  H^{(1)}_0 \left(k\sqrt{(z_1-y_1 + 2j\pi)^2 + (z_2 - y_2)^2} \right) 
$$
where the series converge for $(y_1- z_1, y_2 - z_2) \neq (2j \pi, 0)$, $j\in \Z$, we have
\begin{align*}
F(z, y) &= \frac{1}{8}\sum_{j\in \Z}e^{-i2\pi j \alpha}  H^{(1)}_0 \left(k\sqrt{(z_1-y_1 + 2j\pi)^2 + (z_2 - y_2)^2} \right) \\
& + \frac{1}{8}\sum_{j\in \Z}e^{i2\pi j \alpha}  \overline{H^{(1)}_0 \left(k\sqrt{(y_1-z_1 + 2j\pi)^2 + (y_2 - z_2)^2} \right)} \\ 
&= \frac{1}{4} \sum_{j\in \Z}e^{-i2\pi j \alpha}  J_0 \left(k\sqrt{(z_1-y_1 + 2j\pi)^2 + (z_2 - y_2)^2} \right). 
\end{align*}

\end{proof}

In the next theorem we will establish a stability estimate for the indicator function.  
Assume $u_{sc}(\cdot,l) \in L^2(\Gamma_r\cup \Gamma_{-r})$ for all $l=1,\dots, N$. 


\begin{theorem}
For $\delta>0$, denote by $u_{{sc},\delta}$ and $\left({u}^\pm_{\delta,j}\right)_{j\in\mathbb{Z}}$ the noisy scattered wave and its Rayleigh sequences respectively, for which we have
$$
\sum_{l=1}^N \| u_{{sc},\delta}(\cdot,l) - u_{sc}(\cdot,l)\|_{L^2(\Gamma_r\cup \Gamma_{-r})} \leq \delta.
$$
Define
$$
I_\delta(z) := \sum_{l=1}^N\left| \sum_{j:\beta_j>0}\beta_j \left({u}^+_{\delta,j}(l) \overline{{g}^+_j(z)} + {u}^-_{\delta,j}(l) \overline{{g}^-_j(z)}\right) \right|^p.
$$
Then the following stability property holds
$$
|I_\delta(z) - I(z)| = O(\delta), \quad \text{as}\ \delta\to 0
$$
for every $z\in  \Omega$.
\end{theorem}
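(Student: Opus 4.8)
\emph{Proof proposal.} The plan is to linearize the problem in two stages: first strip off the $p$-th power with an elementary mean-value estimate, then bound the resulting linear difference by the $L^2$-noise on $\Gamma_{\pm r}$, using crucially that only the finitely many propagating modes enter the indicator.

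Write, for each incident field index $l$,
\[
A_l(z) := \sum_{j:\beta_j>0}\beta_j\bigl({u}^+_j(l)\overline{{g}^+_j(z)}+{u}^-_j(l)\overline{{g}^-_j(z)}\bigr),
\]
and let $A_{\delta,l}(z)$ be the same sum with ${u}^\pm_{\delta,j}(l)$ in place of ${u}^\pm_j(l)$, so that $I(z)=\sum_l|A_l(z)|^p$ and $I_\delta(z)=\sum_l|A_{\delta,l}(z)|^p$. Since $p$ is a positive integer, the mean value theorem applied to $t\mapsto t^p$ on $[0,\infty)$ together with the reverse triangle inequality gives $\bigl||a|^p-|b|^p\bigr|\le p\max(|a|,|b|)^{p-1}|a-b|$ for complex $a,b$, hence
\[
|I_\delta(z)-I(z)|\le p\sum_{l=1}^N\max\bigl(|A_{\delta,l}(z)|,|A_l(z)|\bigr)^{p-1}\,\bigl|A_{\delta,l}(z)-A_l(z)\bigr|.
\]
It therefore suffices to show each $|A_{\delta,l}(z)-A_l(z)|=O(\delta)$ while the prefactors stay bounded as $\delta\to0$.

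For the linear difference I would use that the sum runs over the finite set $\{j:\beta_j>0\}$, say of cardinality $M$, and that for such $j$ the exponential in ${g}^\pm_j(z)$ has modulus one, so $\beta_j|{g}^\pm_j(z)|=\tfrac1{4\pi}$. This yields
\[
\bigl|A_{\delta,l}(z)-A_l(z)\bigr|\le\frac1{4\pi}\sum_{j:\beta_j>0}\bigl(|{u}^+_{\delta,j}(l)-{u}^+_j(l)|+|{u}^-_{\delta,j}(l)-{u}^-_j(l)|\bigr).
\]
To control the coefficient differences, recall that on $\Gamma_{\pm r}$ the Rayleigh expansion identifies ${u}^\pm_j\,e^{i\beta_j(r-h)}$ (up to a factor $(2\pi)^{-1/2}$) with the $j$-th coefficient of $u_{sc}(\cdot,\pm r)$ relative to the orthonormal system $\{(2\pi)^{-1/2}e^{i\alpha_j x_1}\}_{j\in\mathbb{Z}}$ of $L^2(-\pi,\pi)$; since $|e^{i\beta_j(r-h)}|=1$ precisely for the propagating indices, Bessel's inequality restricted to those indices gives
\[
\sum_{j:\beta_j>0}\bigl|{u}^\pm_{\delta,j}(l)-{u}^\pm_j(l)\bigr|^2\le\frac1{2\pi}\bigl\|u_{{sc},\delta}(\cdot,l)-u_{sc}(\cdot,l)\bigr\|_{L^2(\Gamma_{\pm r})}^2 .
\]
Combined with the Cauchy--Schwarz inequality over the $M$ propagating indices, this produces a bound $|A_{\delta,l}(z)-A_l(z)|\le C_M\,\|u_{{sc},\delta}(\cdot,l)-u_{sc}(\cdot,l)\|_{L^2(\Gamma_r\cup\Gamma_{-r})}$ with $C_M$ depending only on $M$.

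It remains to note that for fixed $z$ the number $|A_l(z)|$ is a finite constant and $|A_{\delta,l}(z)|\le|A_l(z)|+C_M\delta$, so for $\delta\le1$ the prefactor $\max(|A_{\delta,l}(z)|,|A_l(z)|)^{p-1}$ is bounded by some $C(z)$ independent of $\delta$. Substituting both estimates into the displayed inequality and using $\sum_l\|u_{{sc},\delta}(\cdot,l)-u_{sc}(\cdot,l)\|_{L^2(\Gamma_r\cup\Gamma_{-r})}\le\delta$ gives $|I_\delta(z)-I(z)|\le p\,C(z)\,C_M\,\delta$, which is the claimed $O(\delta)$. The only genuinely delicate point is the passage from $L^2$-noise on $\Gamma_{\pm r}$ to the perturbation of the Rayleigh coefficients: one must restrict Bessel's inequality to the propagating modes, where the factors $e^{i\beta_j(r-h)}$ have modulus one, rather than apply it to all modes, where the exponentially small evanescent factors would make the estimate useless; the finiteness of the propagating spectrum is exactly what keeps the final Cauchy--Schwarz step harmless.
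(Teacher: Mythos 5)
Your proposal is correct and follows essentially the same route as the paper: bound the perturbation of the finitely many propagating Rayleigh coefficients by the $L^2$ noise on $\Gamma_{\pm r}$, use that $\beta_j g_j^\pm(z)$ is uniformly bounded there, and handle the $p$-th power by a Lipschitz-type estimate. The only cosmetic differences are that you invoke the mean value theorem (and Bessel's inequality plus Cauchy--Schwarz over the modes) where the paper uses the algebraic identity $a^p-b^p=(a-b)\sum_{m=0}^{p-1}a^mb^{p-1-m}$ and a direct bound on the coefficient integrals, which yield the same $O(\delta)$ conclusion.
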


\begin{proof}
For $l=1,\dots,N$ and $j\in\mathbb{Z}$ such that $\beta_j >0$,  we have
$$
|{u}^\pm_{\delta,j}(l) - {u}_j^\pm(l)| \leq \frac{1}{2\pi} \int_{\Gamma_{\pm r}}|u_{{sc}}^\delta(x,l) - u_{sc}(x,l)|\ ds(x),
$$
thus, by Cauchy-Schwarz inequality,
$$
\sum_{l=1}^N |{u}^\pm_{\delta,j}(l) - {u}_j^\pm(l)|  \leq \sum_{l=1}^N \| u_{{sc}}^\delta(\cdot,l) - u_{sc}(\cdot,l)\|_{L^2(\Gamma_r\cup \Gamma_{-r})} \leq \delta.
$$
Therefore
\begin{align*}
&\left|\ \sum_{l=1}^N \left| \sum_{j:\beta_j>0}\beta_j \left({u}^+_{\delta,j}(l) \overline{{g}^+_j(z)} + {u}^-_{\delta,j}(l) \overline{{g}^-_j(z)}\right) \right| - \sum_{l=1}^N \left| \sum_{j:\beta_j>0}\beta_j \left({u}^+_{j}(l) \overline{{g}^+_j(z)} + {u}^-_{j}(l) \overline{{g}^-_j(z)}\right) \right|\ \right| \\
&\leq \sum_{l=1}^N \left|\  \left| \sum_{j:\beta_j>0}\beta_j \left({u}^+_{\delta,j}(l) \overline{{g}^+_j(z)} + {u}^-_{\delta,j}(l) \overline{{g}^-_j(z)}\right) \right| - \left| \sum_{j:\beta_j>0}\beta_j \left({u}^+_{j}(l) \overline{{g}^+_j(z)} + {u}^-_{j}(l) \overline{{g}^-_j(z)}\right) \right|\ \right| \\
& \leq \sum_{l=1}^N\sum_{j:\beta_j>0}\beta_j\left( \left|{u}^+_{\delta,j}(l)- {u}_j^+(l)\right| \left| {g}^{+}_j(z) \right| + \left|{u}^-_{\delta,j}(l)- {u}_j^-(l) \right| \left| {g}^{-}_j(z) \right| \right)\\
&\leq C\delta,
\end{align*}
where
$$
C = \left(\max_{j:\beta_j>0}\left\| {g}^{+}_j \right\|_{L^\infty(\Omega)} + \max_{j:\beta_j>0} \left\| {g}^{-}_j \right\|_{L^\infty(\Omega)}\right) \sum_{j:\beta_j>0}\beta_j.
$$
For brevity, set
\begin{align*}
a_l&:= \left| \sum_{j:\beta_j>0}\beta_j \left({u}^+_{\delta,j}(l) \overline{{g}^+_j(z)} + {u}^-_{\delta,j}(l) \overline{{g}^-_j(z)}\right) \right|, \\
b_l&:= \left| \sum_{j:\beta_j>0}\beta_j \left({u}^+_{j}(l) \overline{{g}^+_j(z)} + {u}^-_{j}(l) \overline{{g}^-_j(z)}\right) \right|, \\
\gamma &:= \max_{\substack{ j:\beta_j>0\\ l=1,\dots,N}}\left\{ \left| {u}^{+}_j(l) \right|, \left| {u}^{-}_j(l) \right| \right\},
\end{align*}
then we have 
$$
\sum_{l=1}^N\left|a_l-b_l\right| \leq C\delta, \quad b_l \leq C\gamma,
$$
 for all $l=1,\dots,N$. Hence, for all $z \in \Omega$,
\begin{align*}
|I_\delta(z) - I(z)| &\leq \sum_{l=1}^N \left|a_l^p - b_l^p\right| = \sum_{l=1}^N \left(\left|a_l-b_l\right|\left| \sum_{m=0}^{p-1} a_l^m b_l^{p-1-m}\right|\right) \\ 
&\leq \left(\sum_{l=1}^N\left|a_l-b_l\right| \right) \left(\sum_{l=1}^N \left| \sum_{m=0}^{p-1} a_l^m b_l^{p-1-m}\right|\right) \\
&\leq C\delta \sum_{l=1}^N \sum_{m=0}^{p-1} (\left|a_l-b_l\right|+b_l)^m b_l^{p-1-m} \\
&\leq C\delta \sum_{l=1}^N \sum_{m=0}^{p-1}2^m (\left|a_l-b_l\right|^m+b_l^m)b_l^{p-1-m} \\
&\leq C\delta \sum_{m=0}^{p-1} \sum_{l=1}^N 2^m (\left|a_l-b_l\right|^m+\gamma^m)\gamma^{p-1-m} \\
&\leq C\delta \sum_{m=0}^{p-1}2^m \left(C^m\delta^m \gamma^{p-1-m}+N\gamma^{p-1}\right)\\
& = O(\delta),\quad \text{as}\ \delta\to 0.
\end{align*}
This completes the proof.
\end{proof}

\section{Numerical study}
\label{numerical}

In this section, we test the performance of the proposed sampling method with respect to  
the number of incident sources, the levels of noise in the data, the wave numbers, the values of parameter $\alpha$ (in~\eqref{eq:radiationCondition})  and exponent $p$, and the shape
of the periodic scatterers. For the latter category we compare the performance of the  proposed sampling method  with those of the factorization method and the orthogonality sampling method.

In all of the numerical examples below we choose the following parameters:
\begin{align*}
\text{sampling domain} = (-\pi,\pi)\times (-1,1),  \\
  \text{ measurement  boundary }\ \Gamma_{\pm 2} = (-\pi,\pi)\times \{\pm 2\}, \\
  \text{location of incident sources } \ \Gamma_{\pm 3} = (-\pi,\pi)\times \{\pm 3\}.
\end{align*}
The sampling domain is uniformly discretized in each dimension  with $128 \times 96$ sampling points. 
The boundary measurements  $\Gamma_{\pm 2} $ are discretized uniformly with 64 points on each  boundary. 
 By using $N$ incident sources we mean to consider
 $$
u_{in}(x,l) = G(x,s_l), \quad x \in \Omega,\quad s_l\in \Gamma_{\pm 3},\ l=1,\dots,N,
$$
where  $N/2$ sources are uniformly located on $\Gamma_{+ 3} $  and $N/2$ sources are uniformly located on $\Gamma_{- 3} $. 
We generate the synthetic scattering data by solving the direct problem with the spectral Galerkin method studied in~\cite{Lechl2014}.
With  artificial noise added to the synthetic scattering data we implement the indicator function 
$$
I_\delta(z) = \sum_{l=1}^{N}\left| \sum_{j:\beta_j >0 } \beta_j \left({u}^+_{\delta,j}(l) \overline{{g}^+_j(z)} + {u}^-_{\delta,j}(l) \overline{{g}^-_j(z)}\right) \right|^p.
$$

To compare with the orthogonality sampling method we implement following indicator function
$$
I_{OSM}(z) = \sum_{l=1}^N \left| \int_{\Gamma_{+ 2} \cup \Gamma_{- 2}}u_{sc,\delta}(x) \, \overline{G(x,z)}ds(x)  \right|^p.
$$ 
This indicator function of the orthogonality sampling method can be rewritten in the modal form by using the Rayleigh expansion of the scattered field. 
In this modal form the evanescent modes or the exponentially decaying terms can be neglected and there remains only a finite number of  the propagating modes. If we drop $\beta_j$ in $I_\delta(z)$, we will 
 approximately obtain  the modal form of $I_{OSM}(z)$.  We refer to~\cite{Nguye2014} for the indicator function of 
the factorization method implemented for the test in this section.

\subsection{Reconstruction with one incident source (Figure~\ref{fi1})} 
In this section we test the performance of the sampling method for data generated by only one incident source. 
Here the parameters are chosen as $\alpha = 0$,   wave number $k = 2\pi$, and  exponent $p = 4$. We  add 20$\%$ artificial noise to the scattered field data  ($\delta = 20\%$). 
From Figure~\ref{fi1} we can see that the method is able to reconstruct small scatterers quite well.
This capability is an advantage over classical sampling methods (e.g. linear sampling method, factorization method) in terms of computational efficiency since classical methods are only able to reconstruct targets with data generated by multiple incident fields. 
However, the proposed sampling method fails to reconstruct  scatterers with extended shape, which is reasonable since only one incident source and one wave number are used for the data. 
We refer to Figure~\ref{fig2} for improved results when multiple incident sources are used for the reconstruction.

\begin{figure}[ht!]
\centering

\begin{subfigure}{\textwidth}
\centering
\includegraphics[trim=0 250 0 250,clip,width= 0.85\textwidth]{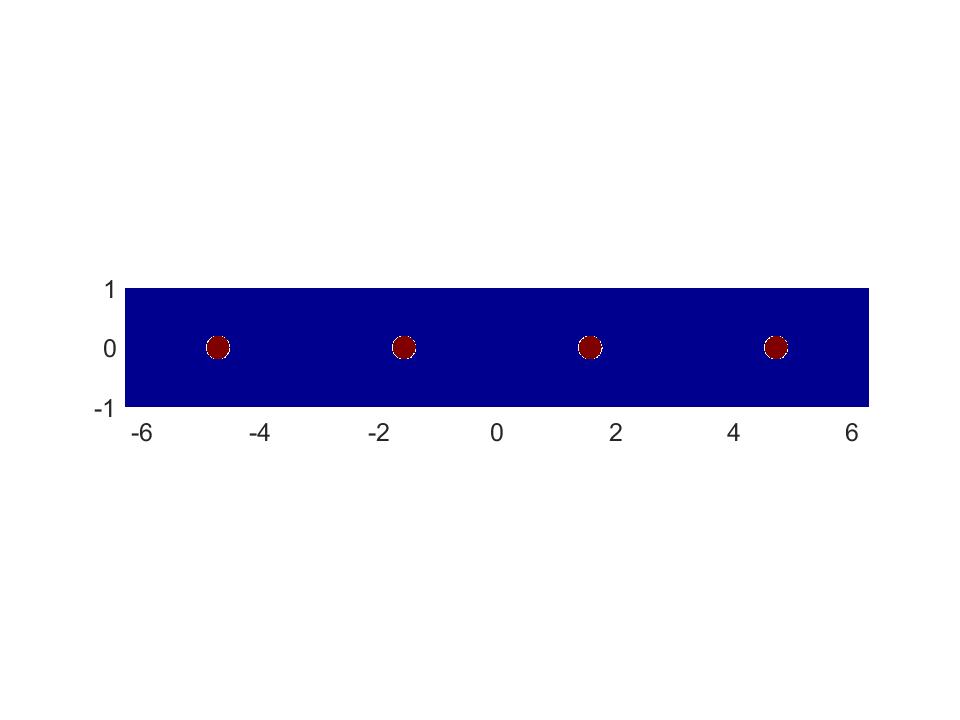}
\caption{True geometry in $(-2\pi,2\pi)$.}
\end{subfigure}

\begin{subfigure}{\textwidth}
\centering
\includegraphics[trim=0 250 0 250,clip,width=0.85\textwidth]{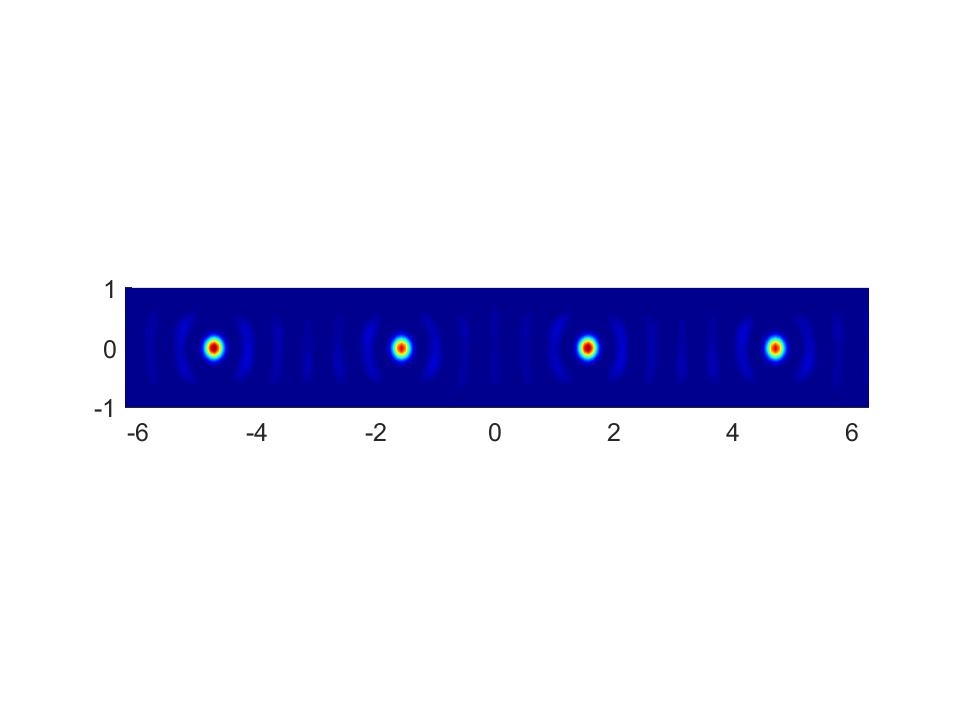}
\caption{Reconstruction of small elliptical scatterers in Figure~\ref{fi1}-(a).}
\end{subfigure}

\begin{subfigure}{\textwidth}
\centering
\includegraphics[trim=0 250 0 250,clip,width=0.85\textwidth]{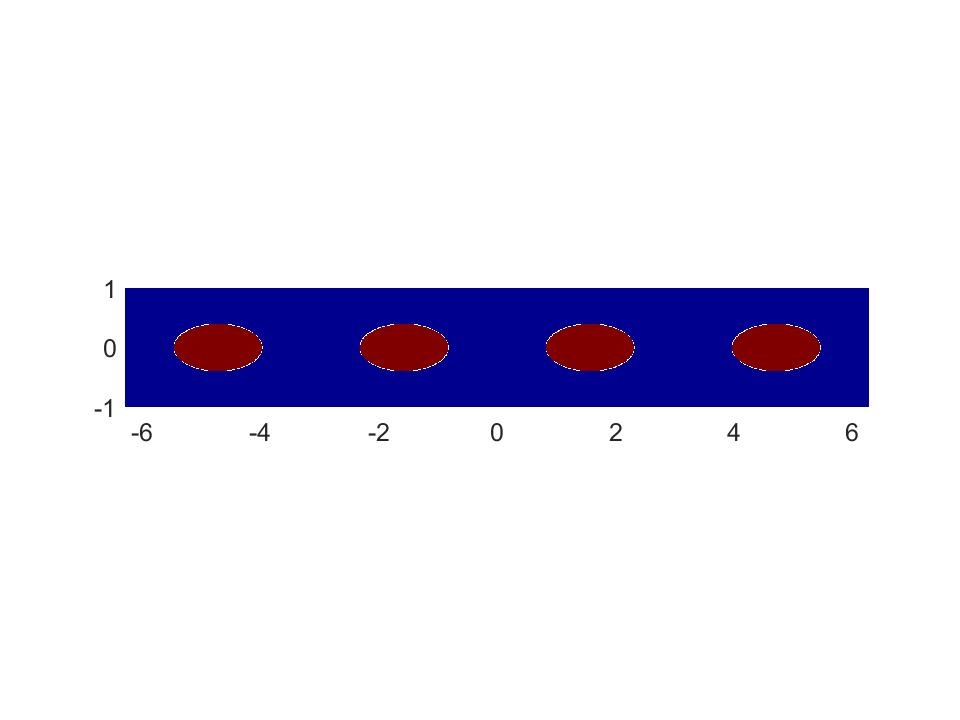}
\caption{True geometry in $(-2\pi,2\pi)$.}
\end{subfigure}

\begin{subfigure}{\textwidth}
\centering
\includegraphics[trim=0 250 0 250,clip,width=0.85\textwidth]{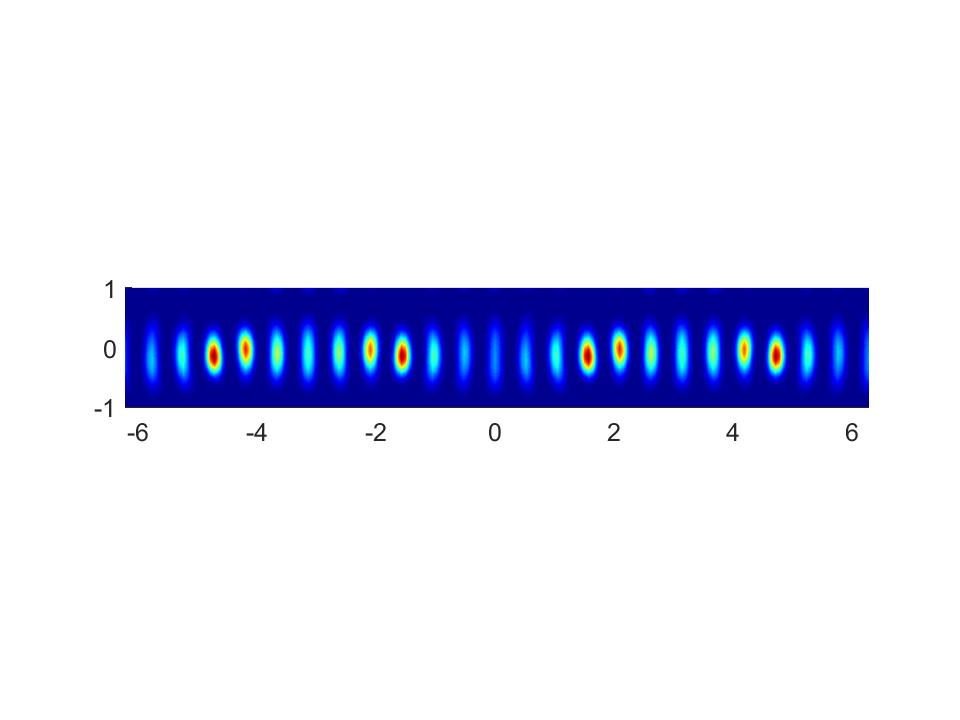}
\caption{Reconstruction of extended elliptical scatterers in Figure~\ref{fi1}-(c).}
\end{subfigure}

\caption{Reconstruction of small and extended  elliptical scatterers with one incident source.}
\label{fi1}

\end{figure}

\subsection{Reconstruction with multiple incident sources (Figure~\ref{fig2})}
In this section we test the performance of the method for reconstructing extended scatterers with different number of incident sources.  
The parameters are the same  as in Figure~\ref{fi1}, meaning $\alpha = 0$,   wave number $k = 2\pi$,  exponent $p = 4$, and 20$\%$ artificial noise is added to the scattered field data.
If in Figure~\ref{fi1} the method fails to reconstruct extended ellipses with one incident source, the reconstructions are improved with more sources, see Figure~\ref{fig2}. 
The reconstructions already look reasonable with 32 incident sources and continue to improve when  more sources are used. The results remain almost the same even if more than 128 incident sources are used
to generate the scattering data. 

\begin{figure}[h!]
\centering

\begin{subfigure}{\textwidth}
\centering
\includegraphics[trim=0 250 0 250,clip,width=0.85\textwidth]{smth_ellips_true}
\caption{True geometry in $(-2\pi,2\pi)$.}
\end{subfigure}

\begin{subfigure}{\textwidth}
\centering
\includegraphics[trim=0 250 0 250,clip,width=0.85\textwidth]{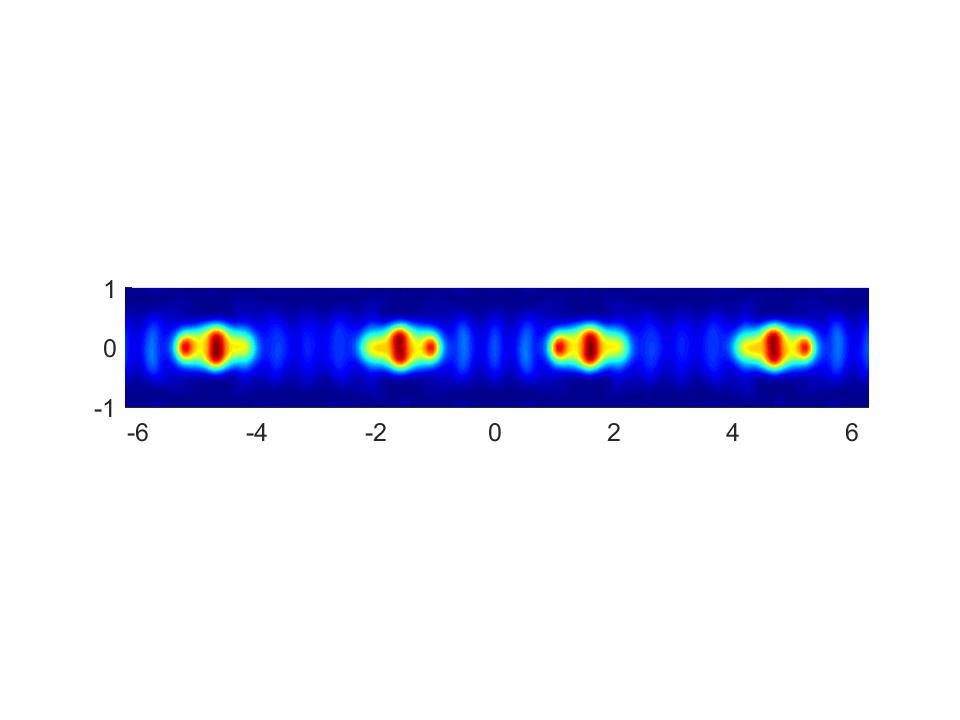}
\caption{Reconstruction with $32$ incident sources.}
\end{subfigure}

\begin{subfigure}{\textwidth}
\centering
\includegraphics[trim=0 250 0 250,clip,width=0.85\textwidth]{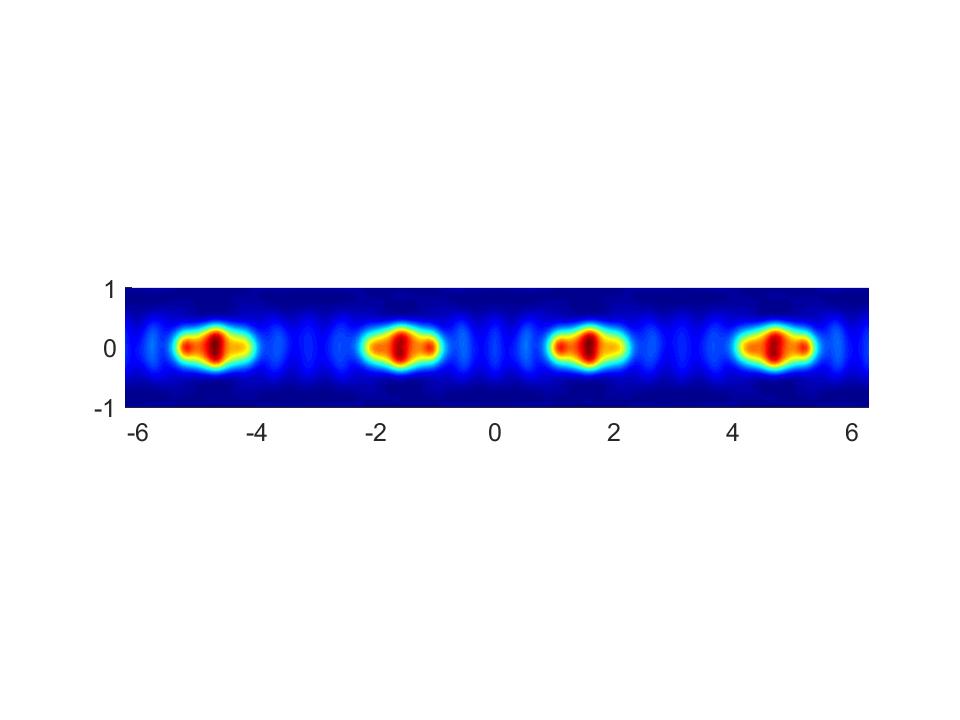}
\caption{Reconstruction with $64$ incident sources.}
\end{subfigure}

\begin{subfigure}{\textwidth}
\centering
\includegraphics[trim=0 250 0 250,clip,width=0.85\textwidth]{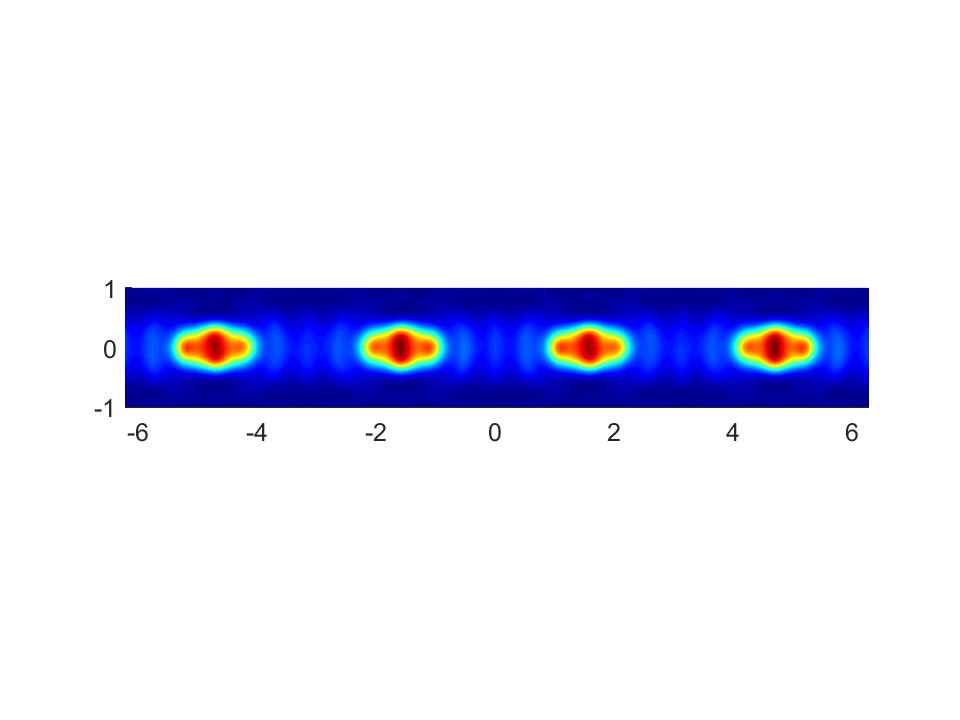}
\caption{Reconstruction with $128$ incident sources.}
\end{subfigure}

\caption{Reconstruction  with different numbers of incident sources.}
\label{fig2}
\end{figure}

\subsection{Reconstruction  with different levels of noise in the data (Figure~\ref{fig3})}
In this section we test the performance of the method for reconstructing extended scatterers with different levels of artificial noise in the data ($10\%, 20\%$ and $40\%$).  
Here the parameters are chosen as $\alpha = 0$,  wave number $k = 2\pi$,  exponent $p = 4$, and 128 incident sources are used to generate the scattering data.
It can be seen from Figure~\ref{fig3} that all reconstructions are not  affected by the amounts of noise added to the data. Furthermore, the reconstructions 
will remain essentially the same even with much higher amounts of noise in the data. This is not a surprise since
 the evanescent modes are typically  sensitive with noise  but the sampling method only uses propagating modes. The great robustness against noise
 in the data was also seen in the orthogonality sampling methods studied in~\cite{Potth2010, Harri2020, Le2022}. 

\begin{figure}[ht!]
\centering

\begin{subfigure}{\textwidth}
\centering
\includegraphics[trim=0 250 0 250,clip,width=0.85\textwidth]{smth_ellips_true}
\caption{True geometry in $(-2\pi,2\pi)$.}
\end{subfigure}

\begin{subfigure}{\textwidth}
\centering
\includegraphics[trim=0 250 0 250,clip,width=0.85\textwidth]{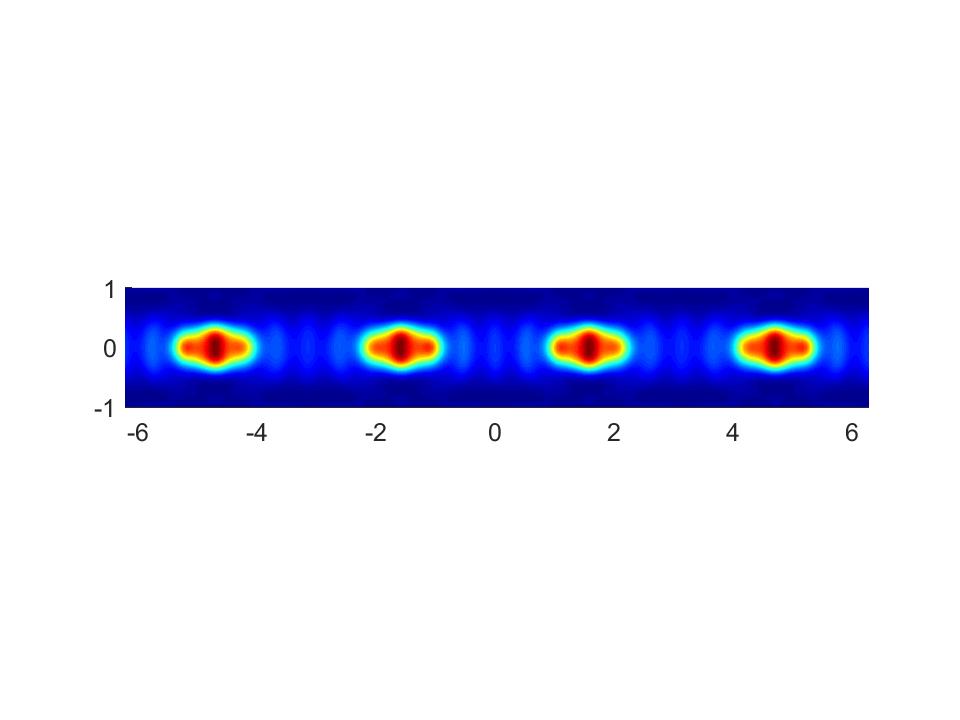}
\caption{Reconstruction with $10\%$ noise.}
\end{subfigure}

\begin{subfigure}{\textwidth}
\centering
\includegraphics[trim=0 250 0 250,clip,width=0.85\textwidth]{smth_ellips_rec}
\caption{Reconstruction with $20\%$ noise.}
\end{subfigure}

\begin{subfigure}{\textwidth}
\centering
\includegraphics[trim=0 250 0 250,clip,width=0.85\textwidth]{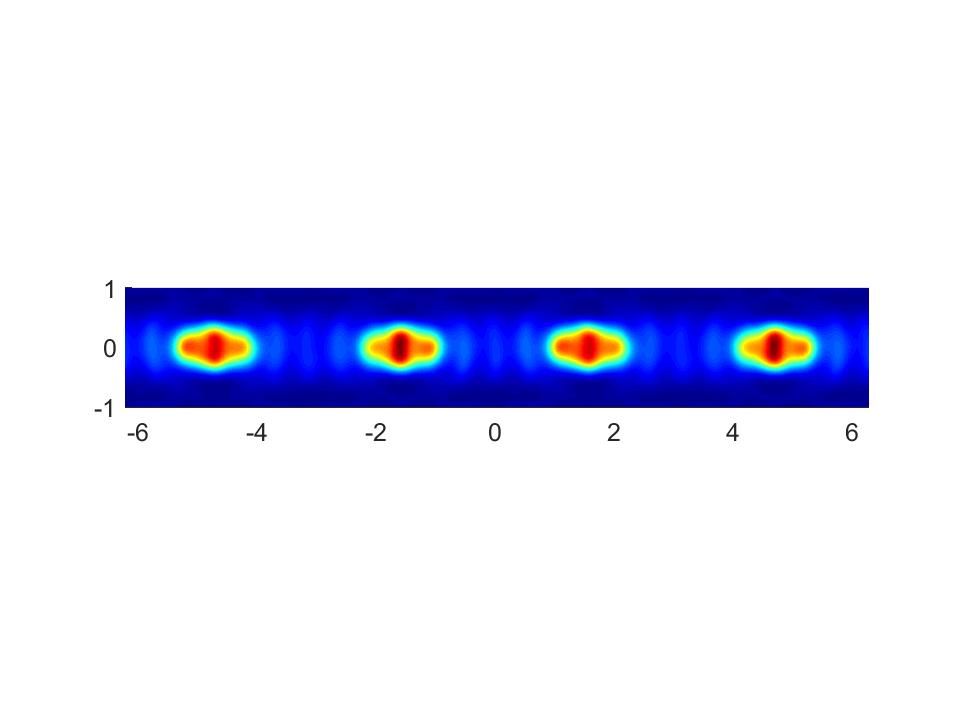}
\caption{Reconstruction with $40\%$ noise.}
\end{subfigure}

\caption{Reconstruction with  different levels of noise in the data.}
\label{fig3}

\end{figure}

\subsection{Reconstruction with different wave numbers (Figure~\ref{fig4})}
In this section we test the performance of the method for reconstructing extended scatterers with different wave numbers ($k = \pi, \,2\pi,\, 3\pi$).  
Here the parameters are chosen as $\alpha = 0$,   exponent $p = 4$,   128 incident sources are used to generate the scattering data, and $20\%$ noise added to the data. 
We can see from~Figure~\ref{fig4} that the resolution of reconstructions improve  as the wave number increases, which is reasonable.  However, it is also known that
 as the wave number increases, the inverse problem will become more difficult to deal with. We can notice some small effect of larger wave numbers in the reconstruction for $k = 3\pi$.

\begin{figure}[h!]
\centering

\begin{subfigure}{\textwidth}
\centering
\includegraphics[trim=0 250 0 250,clip,width=0.85\textwidth]{smth_ellips_true}
\caption{True geometry in $(-2\pi,2\pi)$.}
\end{subfigure}

\begin{subfigure}{\textwidth}
\centering
\includegraphics[trim=0 250 0 250,clip,width=0.85\textwidth]{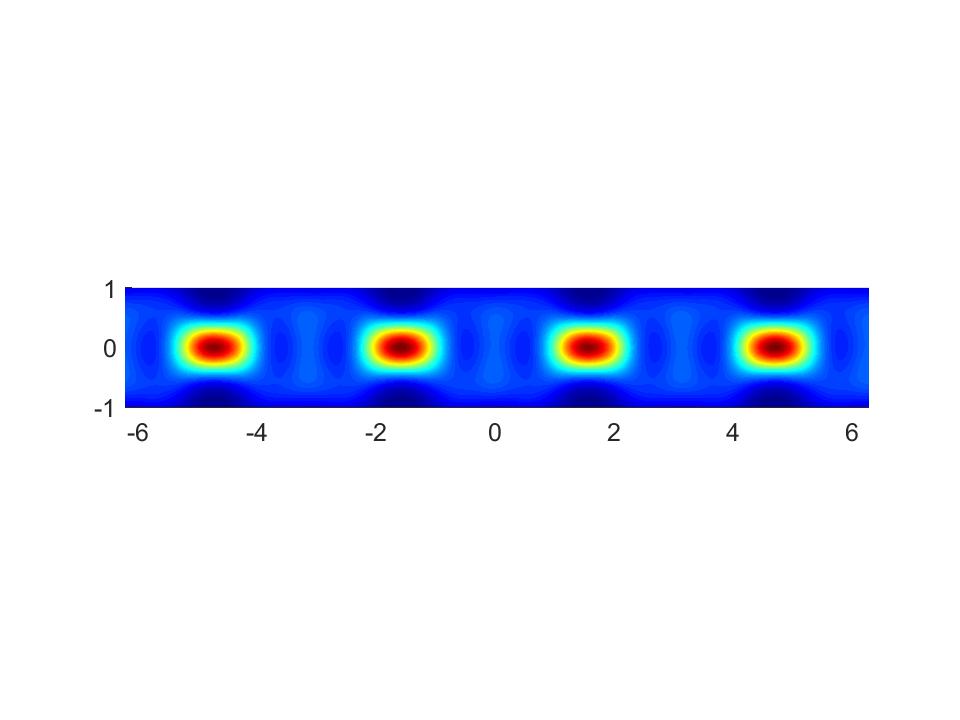}
\caption{Reconstruction with wave number $k = \pi$.}
\end{subfigure}

\begin{subfigure}{\textwidth}
\centering
\includegraphics[trim=0 250 0 250,clip,width=0.85\textwidth]{smth_ellips_rec}
\caption{Reconstruction with  wave number $k = 2\pi$.}
\end{subfigure}

\begin{subfigure}{\textwidth}
\centering
\includegraphics[trim=0 250 0 250,clip,width=0.85\textwidth]{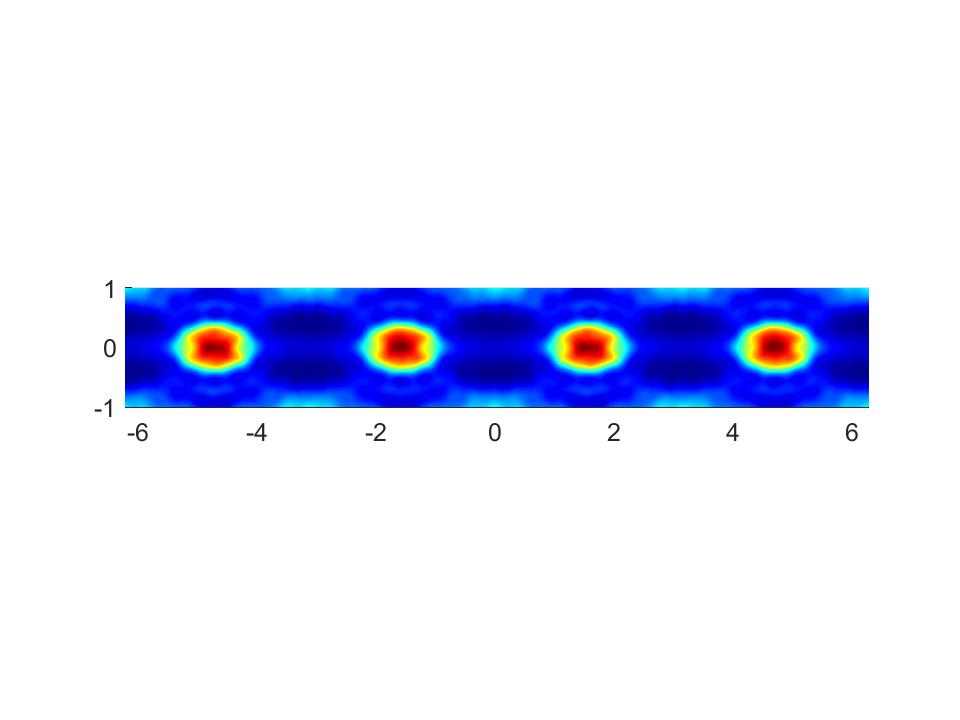}
\caption{Reconstruction with  wave number $k = 3\pi$.}
\end{subfigure}

\caption{Reconstruction with different wave numbers.}
\label{fig4}
\end{figure}

\subsection{Reconstruction with different values of $\alpha$ and  $p$  (Figures~\ref{fig5}--\ref{fig6})}

In this section we test the performance of the method for different values of the parameters $\alpha$ and  $p$. 
The other parameters are chosen as $k = 2\pi$,  128 incident sources are used to generate the scattering data, and $20\%$ 
noise added to the data. From Figures~\ref{fig5}--\ref{fig6}, we can see that the sampling method works well for different 
values of $\alpha$. Although the reconstructions look reasonable for exponents $p = 2$ and $p = 3$, the exponent $p = 4$ 
seems to be an ideal exponent for the indicator function. With larger $p$  the reconstructions will  have cleaner background but lose some small details of the scatterers.

\begin{figure}[h!]
\centering

\begin{subfigure}{\textwidth}
\centering
\includegraphics[trim=0 250 0 250,clip,width=0.85\textwidth]{smth_ellips_true}
\caption{True geometry in $(-2\pi,2\pi)$.}
\end{subfigure}

\begin{subfigure}{\textwidth}
\centering
\includegraphics[trim=0 250 0 250,clip,width=0.85\textwidth]{smth_ellips_rec}
\caption{Reconstruction with parameter $\alpha = 0$.}
\end{subfigure}

\begin{subfigure}{\textwidth}
\centering
\includegraphics[trim=0 250 0 250,clip,width=0.85\textwidth]{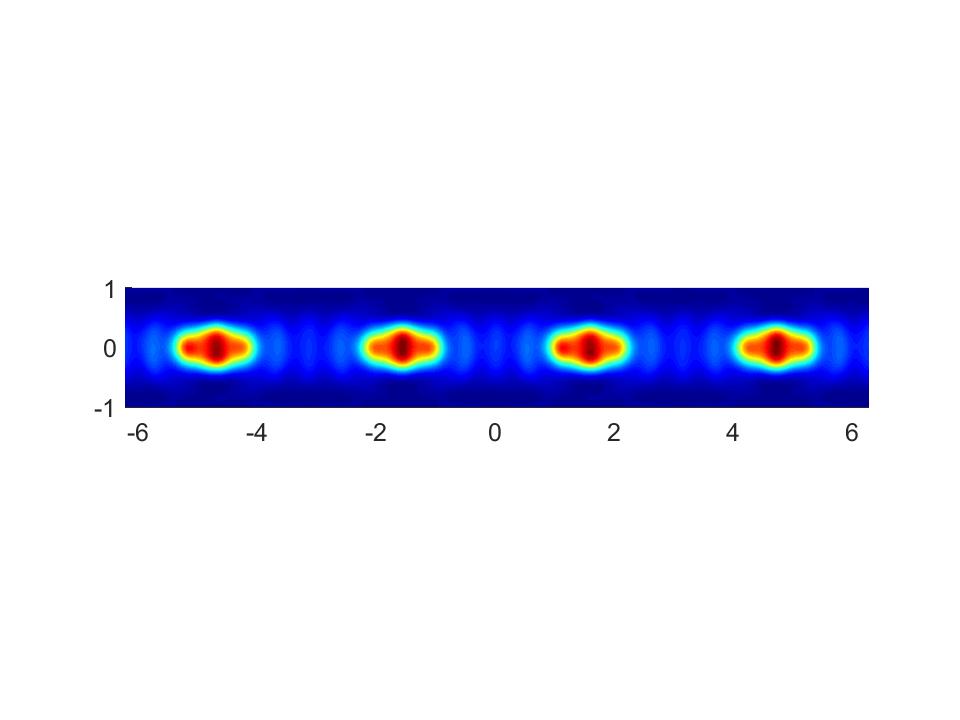}
\caption{Reconstruction with parameter $\alpha = \pi/3$.}
\end{subfigure}

\begin{subfigure}{\textwidth}
\centering
\includegraphics[trim=0 250 0 250,clip,width=0.85\textwidth]{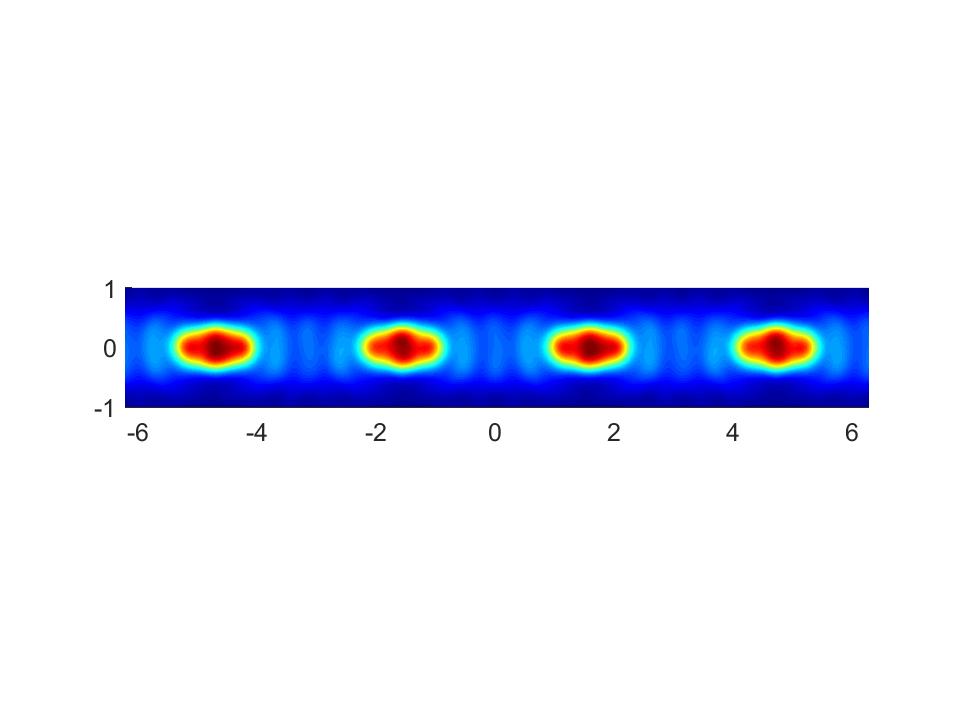}
\caption{Reconstruction with  parameter $\alpha = \pi$.}
\end{subfigure}

\caption{Reconstruction with different values of parameter $\alpha$.}
\label{fig5}
\end{figure}

\begin{figure}[h!]
\centering

\begin{subfigure}{\textwidth}
\centering
\includegraphics[trim=0 250 0 250,clip,width=0.85\textwidth]{smth_ellips_true}
\caption{True geometry in $(-2\pi,2\pi)$.}
\end{subfigure}

\begin{subfigure}{\textwidth}
\centering
\includegraphics[trim=0 250 0 250,clip,width=0.85\textwidth]{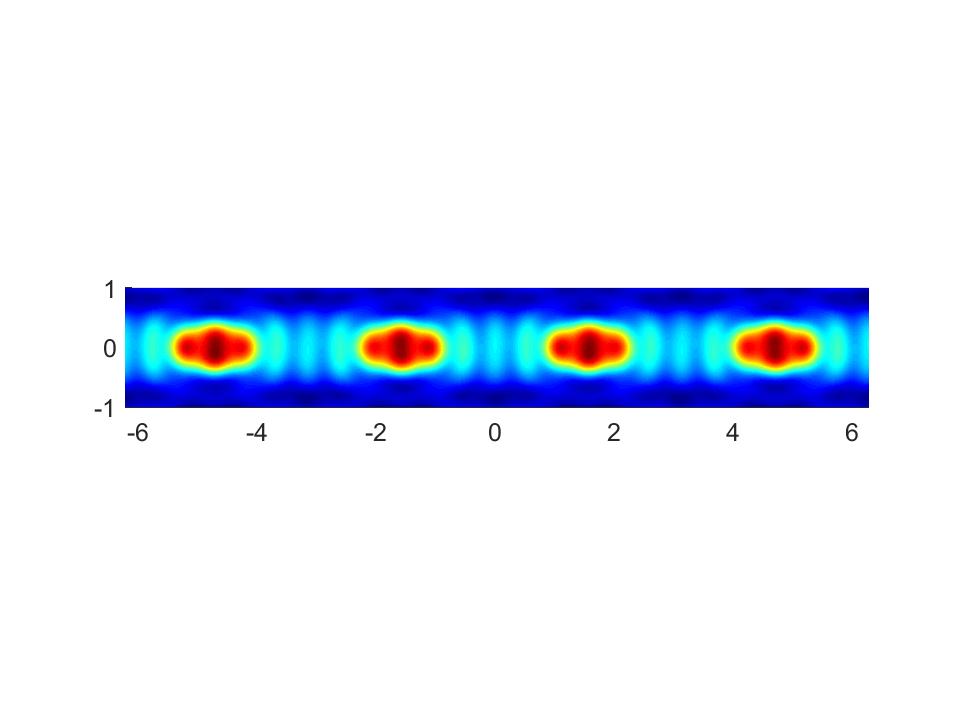}
\caption{Reconstruction with exponent $p = 2$.}
\end{subfigure}

\begin{subfigure}{\textwidth}
\centering
\includegraphics[trim=0 250 0 250,clip,width=0.85\textwidth]{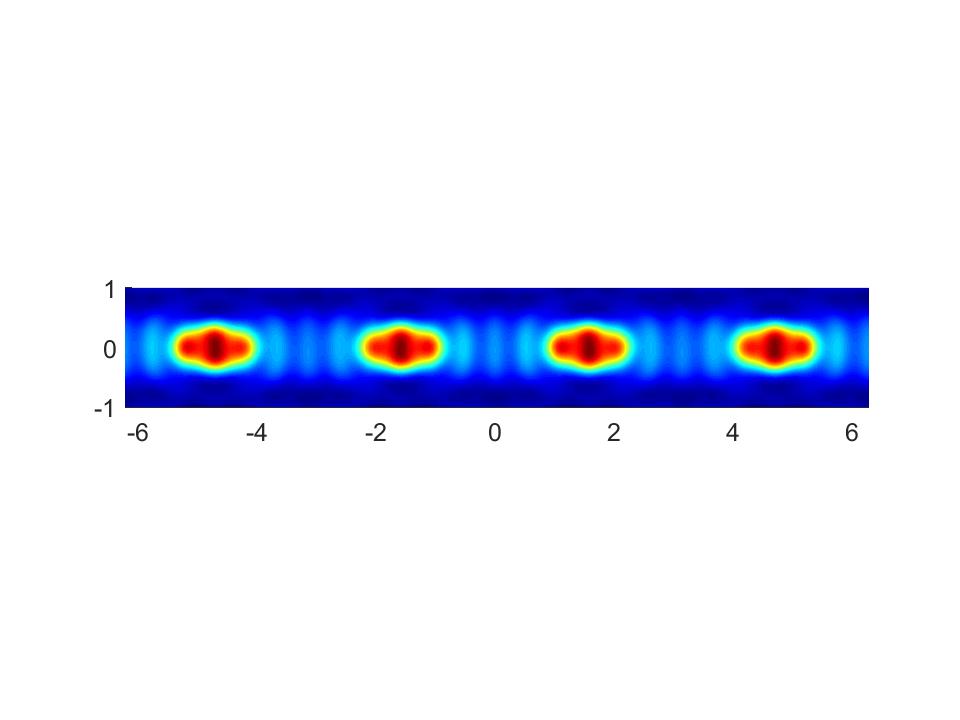}
\caption{Reconstruction with  exponent $p = 3$.}
\end{subfigure}

\begin{subfigure}{\textwidth}
\centering
\includegraphics[trim=0 250 0 250,clip,width=0.85\textwidth]{smth_ellips_rec}
\caption{Reconstruction with exponent $p = 4$.}
\end{subfigure}

\caption{Reconstruction with different exponents  $p$ in the indicator function.}
\label{fig6}
\end{figure}

\subsection{Reconstruction of different shapes and comparison with  the factorization method and the orthogonality sampling method (Figures~\ref{fig7}--\ref{fig10})}
In this section we test the performance of the proposed sampling method for different shapes of periodic scatterers and compare its performance with those of the factorization method and the orthogonality sampling method. 
The parameters are chosen as $\alpha = 0$,  wave number $k = 2\pi$, exponent $p = 4$,   128 incident sources are used to generate the scattering data, and $20\%$ noise added to the data. 
It is obvious from all reconstructions in Figures~\ref{fig7}--\ref{fig10} that the factorization method suffers severely from the $20\%$ amount of noise added to the scattering data. Actually, the reconstructions of the factorization method
can be greatly affected even with smaller amounts of noises (e.g. $5\%$ or $7\%$). This unstable behavior of
the factorization method in reconstructing periodic media was also reported in~\cite{Arens2005, Nguye2016}. The reconstructions of the orthogonality sampling method are as stable as those of the proposed sampling method but it is also clear from the pictures that the accuracy in the reconstructions of the orthogonality sampling method is much worse than that of the proposed sampling method. The proposed sampling method may provide reasonable reconstructions  
for different shapes considered in the test. This indicates a high efficiency of this sampling method.

\begin{figure}[h!]
\centering

\begin{subfigure}{\textwidth}
\centering
\includegraphics[trim=0 250 0 250,clip,width=0.85\textwidth]{smth_ellips_true}
\caption{True geometry in $(-2\pi,2\pi)$.}
\end{subfigure}

\begin{subfigure}{\textwidth}
\centering
\includegraphics[trim=0 250 0 250,clip,width=0.85\textwidth]{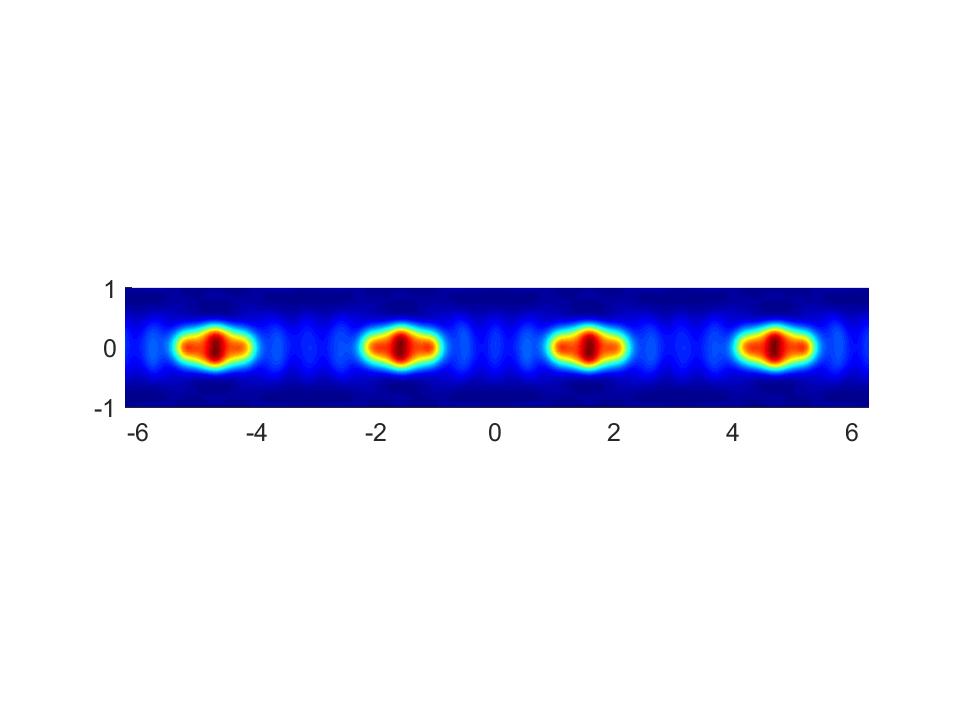}
\caption{Reconstruction by the proposed sampling method.}
\end{subfigure}

\begin{subfigure}{\textwidth}
\centering
\includegraphics[trim=0 250 0 250,clip,width=0.85\textwidth]{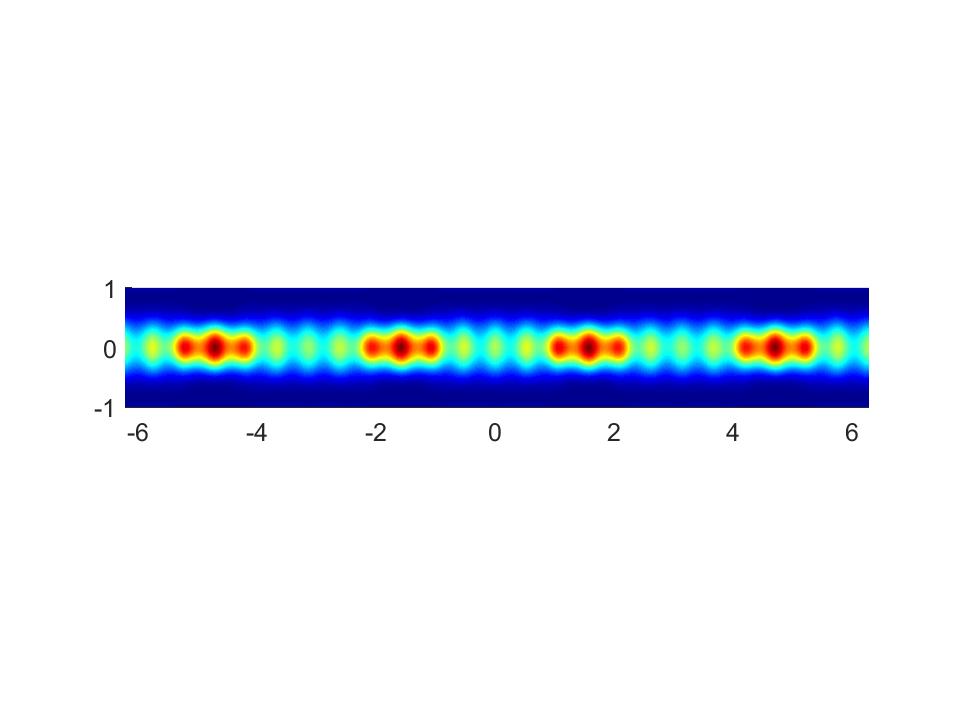}
\caption{Reconstruction by the orthogonality sampling method.}
\end{subfigure}

\begin{subfigure}{\textwidth}
\centering
\includegraphics[trim=0 250 0 250,clip,width=0.85\textwidth]{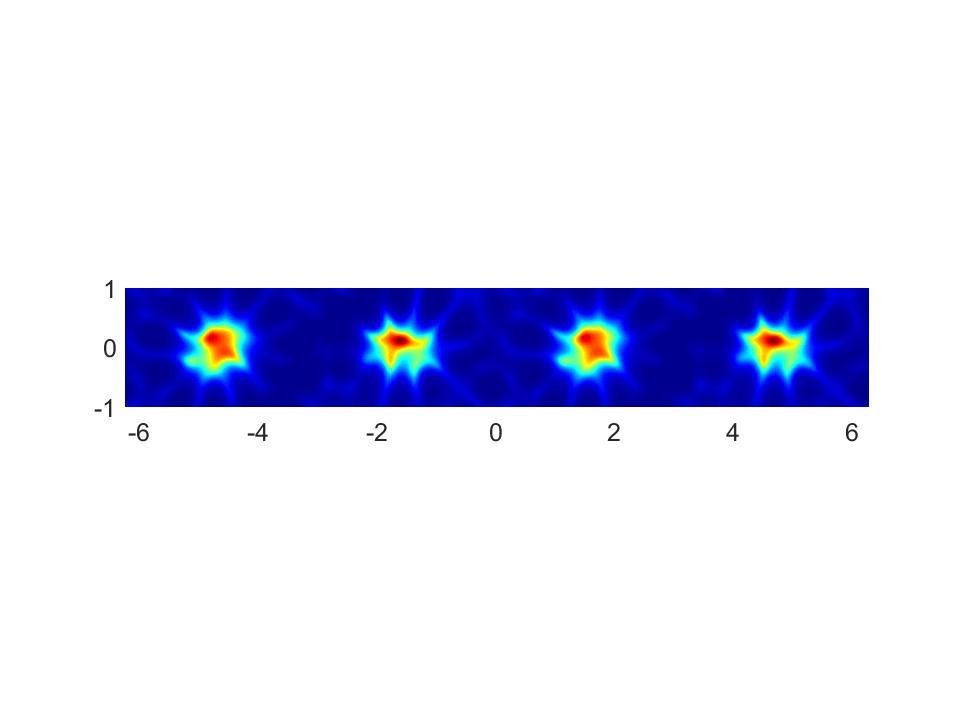}
\caption{Reconstruction by the factorization method.}
\end{subfigure}

\caption{Reconstruction of elliptical scatterers  using different sampling methods.}
\label{fig7}
\end{figure}


\begin{figure}[h!]
\centering

\begin{subfigure}{\textwidth}
\centering
\includegraphics[trim=0 250 0 250,clip,width=0.85\textwidth]{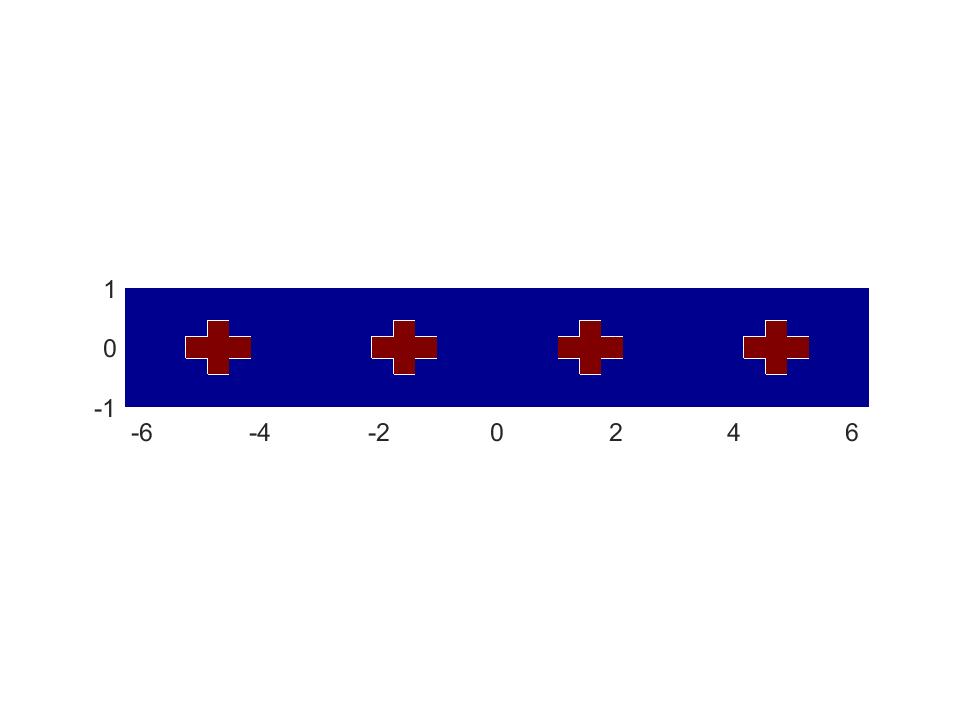}
\caption{True geometry in $(-2\pi,2\pi)$.}
\end{subfigure}

\begin{subfigure}{\textwidth}
\centering
\includegraphics[trim=0 250 0 250,clip,width=0.85\textwidth]{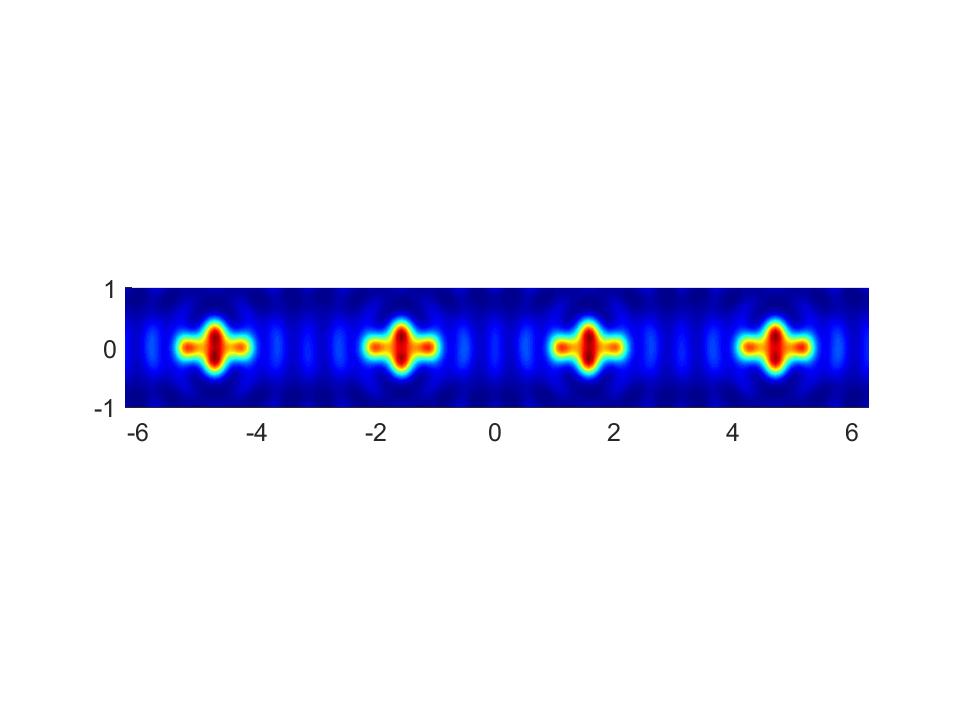}
\caption{Reconstruction by the proposed sampling method.}
\end{subfigure}

\begin{subfigure}{\textwidth}
\centering
\includegraphics[trim=0 250 0 250,clip,width=0.85\textwidth]{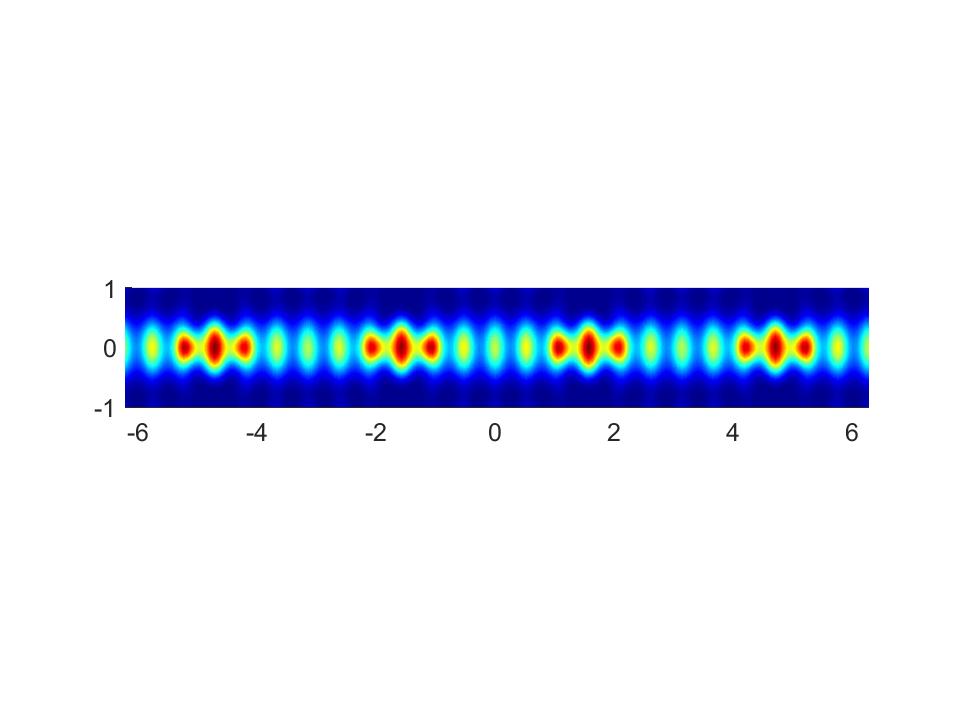}
\caption{Reconstruction by the orthogonality sampling method.}
\end{subfigure}

\begin{subfigure}{\textwidth}
\centering
\includegraphics[trim=0 250 0 250,clip,width=0.85\textwidth]{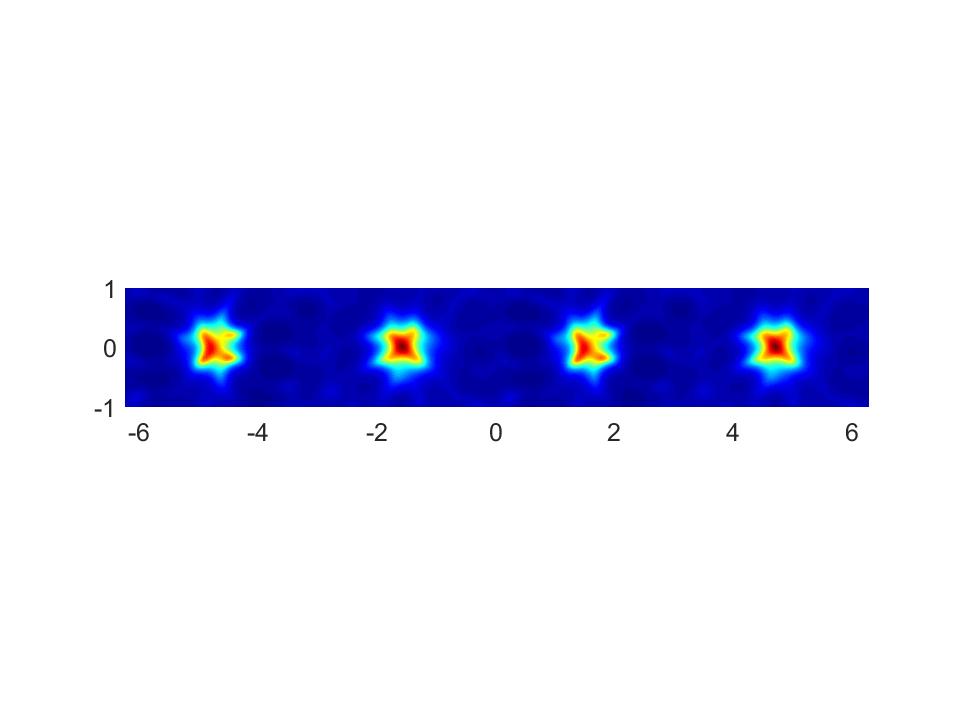}
\caption{Reconstruction by  the factorization method.}
\label{fig8}
\end{subfigure}

\caption{Reconstruction of cross-shaped scatterers using different sampling methods.}
\end{figure}


\begin{figure}[h!]
\centering

\begin{subfigure}{\textwidth}
\centering
\includegraphics[trim=0 250 0 250,clip,width=0.85\textwidth]{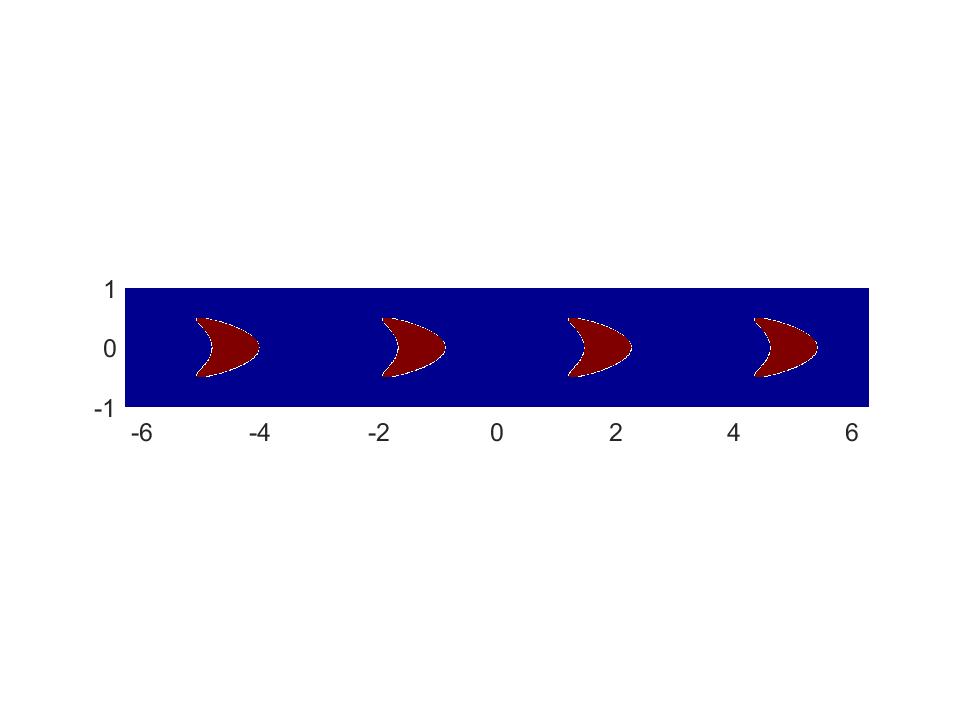}
\caption{True geometry in $(-2\pi,2\pi)$.}
\end{subfigure}

\begin{subfigure}{\textwidth}
\centering
\includegraphics[trim=0 250 0 250,clip,width=0.85\textwidth]{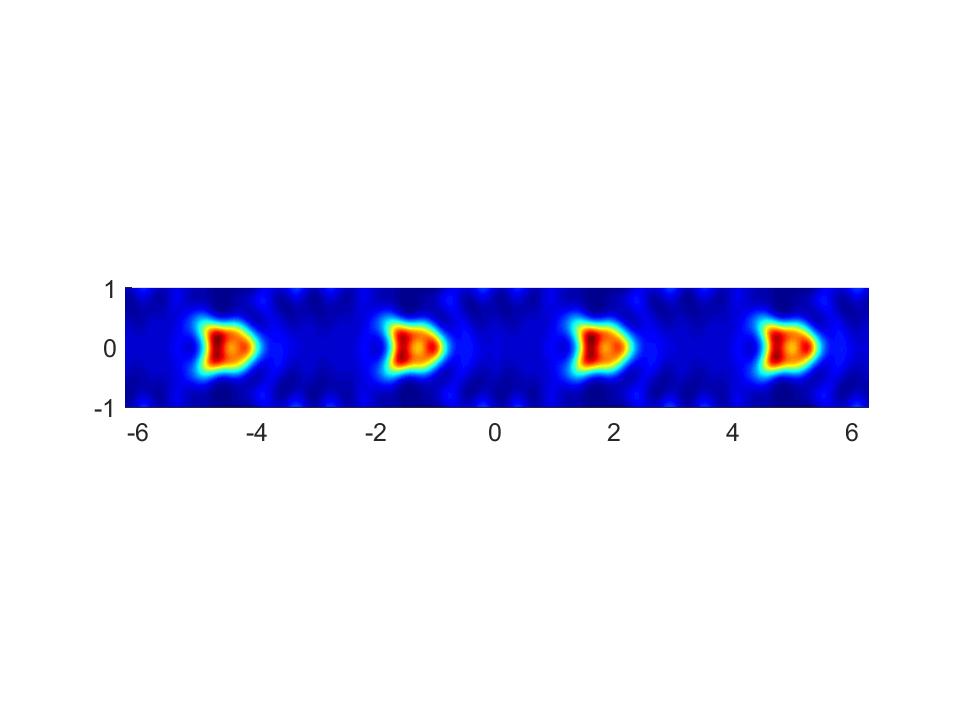}
\caption{Reconstruction by the proposed sampling method.}
\end{subfigure}

\begin{subfigure}{\textwidth}
 \centering
\includegraphics[trim=0 250 0 250,clip,width=0.85\textwidth]{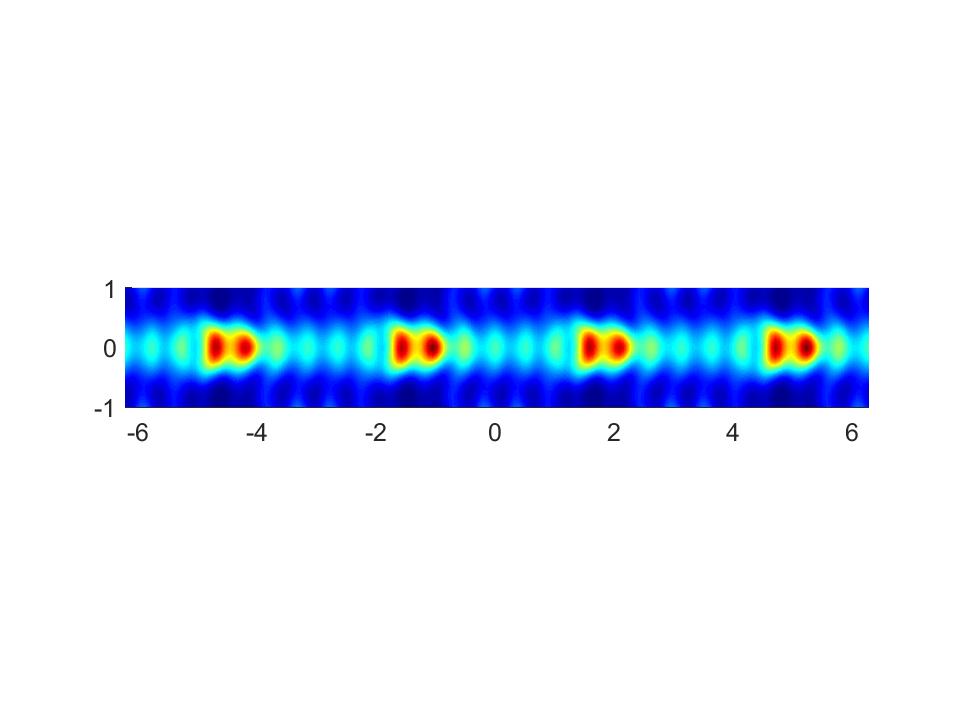}
\caption{Reconstruction by the orthogonality sampling method.}
\end{subfigure}

\begin{subfigure}{\textwidth}
\centering
\includegraphics[trim=0 250 0 250,clip,width=0.85\textwidth]{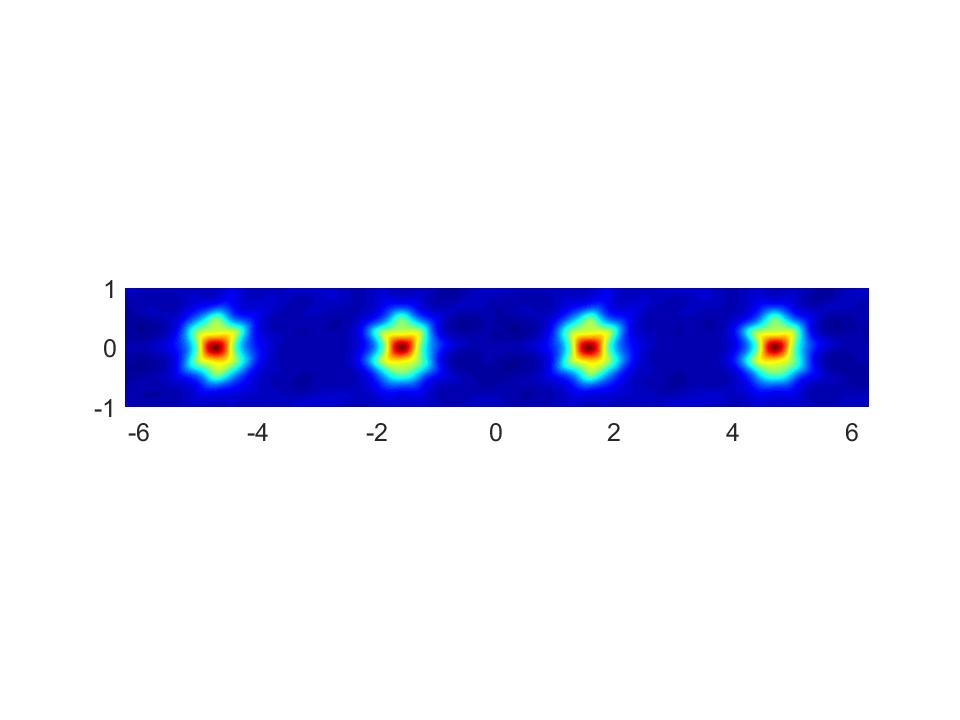}
\caption{Reconstruction by the factorization method.}
\end{subfigure}

\caption{Reconstruction of kite-shaped scatterers using different sampling methods.}
\label{fig9}

\end{figure}


\begin{figure}[h!]
\centering

\begin{subfigure}{\textwidth}
\centering
\includegraphics[trim=0 250 0 250,clip,width=0.85\textwidth]{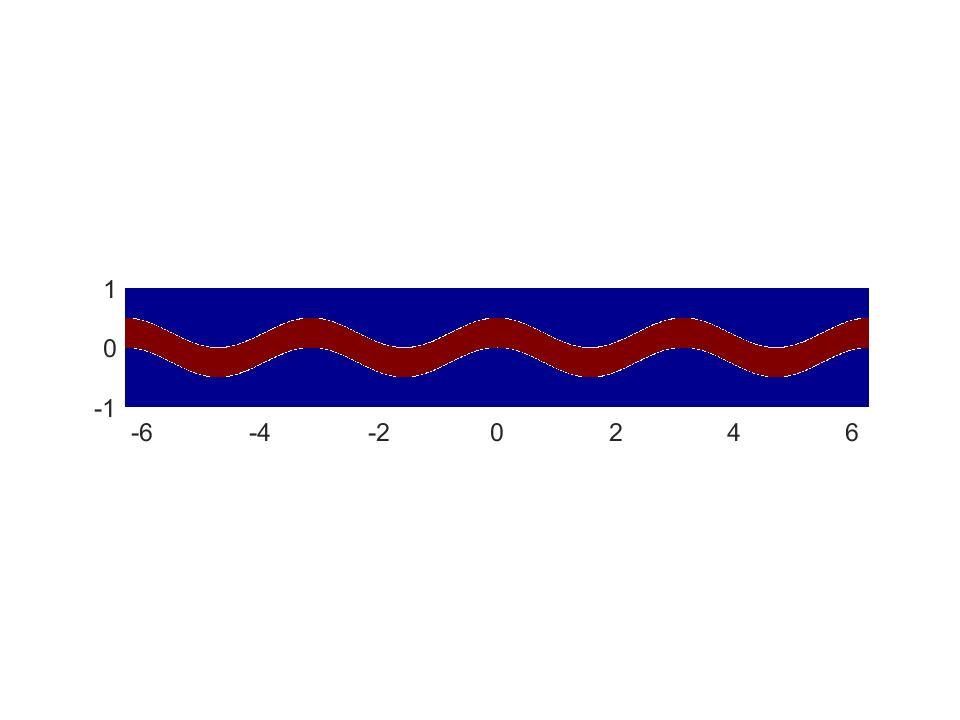}
\caption{True geometry in $(-2\pi,2\pi)$.}
\end{subfigure}

\begin{subfigure}{\textwidth}
\centering
\includegraphics[trim=0 250 0 250,clip,width=0.85\textwidth]{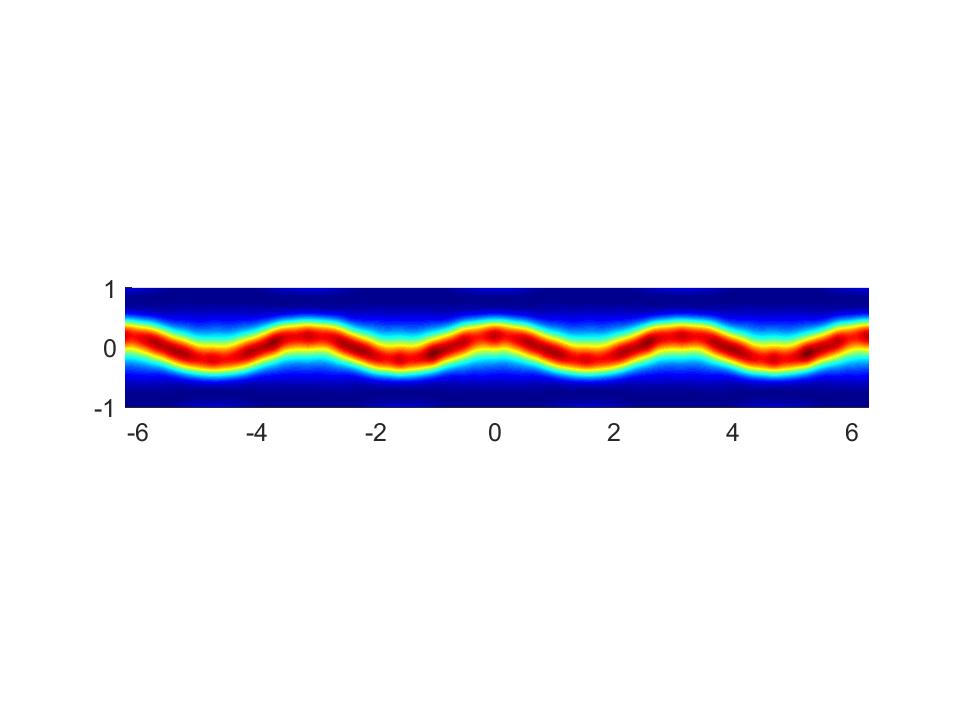}
\caption{Reconstruction by the proposed sampling method.}
\end{subfigure}

\begin{subfigure}{\textwidth}
\centering
\includegraphics[trim=0 250 0 250,clip,width=0.85\textwidth]{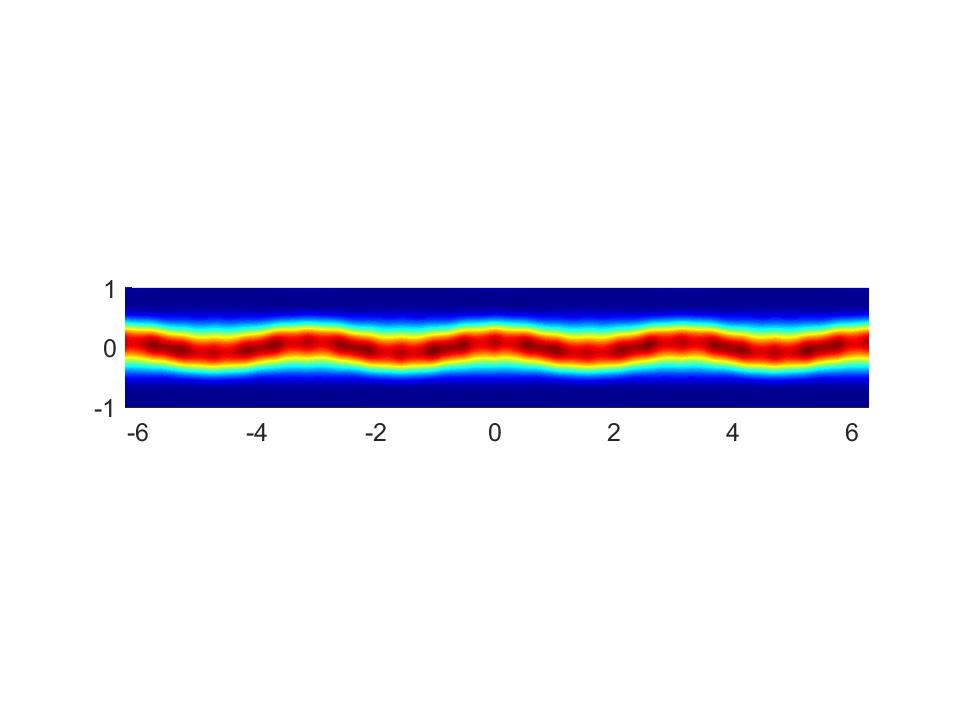}
\caption{Reconstruction by the orthogonality sampling method.}
\end{subfigure}

\begin{subfigure}{\textwidth}
\centering
\includegraphics[trim=0 250 0 250,clip,width=0.85\textwidth]{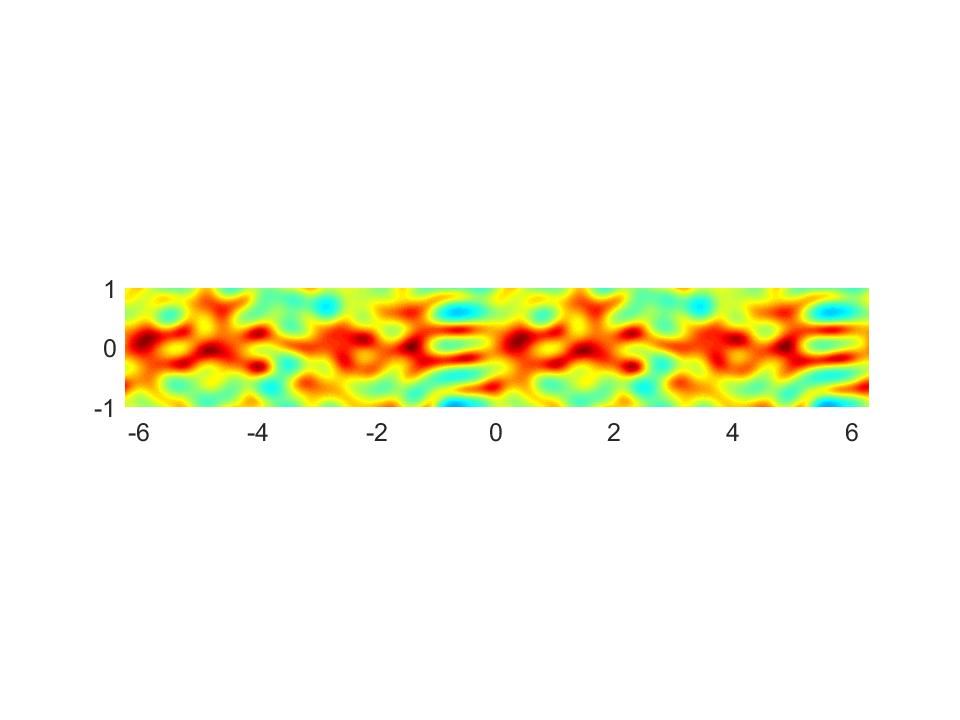}
\caption{Reconstruction by the factorization method.}
\end{subfigure}

\caption{Reconstruction of a sinusoidal scatterer using different sampling methods.}
\label{fig10}

\end{figure}


\vspace{0.5cm}

\textbf{Acknowledgement.} This work was partially supported by NSF grants DMS-1812693 and DMS-2208293.

\bibliographystyle{plain}
\bibliography{ip-biblio2}

\end{document}